\renewcommand{\vec}[1]{\boldsymbol{#1}}
\newcommand{\drop}[1]{}
\newcommand{\norm}[1]{\left\lVert#1\right\rVert}
\newtheorem{proposition}{Proposition}
\theoremstyle{definition}
\newtheorem{definition}{Definition}
\theoremstyle{remark}
\newtheorem{remark}{Remark}
\newcommand\blfootnote[1]{%
  \begingroup
  \renewcommand\thefootnote{}\footnote{#1}%
  \addtocounter{footnote}{-1}%
  \endgroup
}
\begin{document}
\title{Basis functions for residual stresses\blfootnote{This article has been published in `Applied Mathematics and Computation' and can be accessed here: \href{https://authors.elsevier.com/a/1bMDlLvMgOYoK}{https://authors.elsevier.com/a/1bMDlLvMgOYoK}}}

\author{Sankalp Tiwari\thanks{snklptwr@gmail.com, sankalpt@iitk.ac.in} \and Anindya Chatterjee\thanks{anindya100@gmail.com, anindya@iitk.ac.in}}

\date{Mechanical Engineering, IIT Kanpur}
\maketitle

\begin{abstract}
We consider arbitrary preexisting residual stress states in arbitrarily shaped, unloaded bodies. These stresses must be self-equilibrating and traction free. Common treatments of the topic tend to focus on either the mechanical origins of the stress, or methods of stress measurement at certain locations. Here we take the stress field as given and consider the problem of approximating any such stress field, in a given body, as a linear combination
of predetermined fields which can serve as a basis. We consider planar stress states in detail, and introduce an extremization problem that leads to a linear eigenvalue problem. Eigenfunctions of that problem form an orthonormal basis for all possible residual stress states of sufficient smoothness. In numerical examples, convergence of the approximating stress fields is demonstrated in the $L^2$ norm for continuous stress fields as well as for a stress field with a simple discontinuity. Finally, we outline the extension of our theory to three dimensional bodies and states of stress. Our approach can be used to describe arbitrary preexisting residual stress states in arbitrarily shaped bodies using basis functions that are determined by the body geometry alone.
\end{abstract}

\section{Introduction}
\label{intro}
We consider basis functions for interpolating residual stress fields in finite bodies.
In particular, we consider bodies that are arbitrarily shaped, not subjected to body forces, in equilibrium, and with traction free boundaries, but with nonzero internal residual stresses.
The physical sources of the residual stresses may be prior manufacturing processes, deformation history, thermal gradients, or other phenomena. Here we are interested solely in mathematical ways to discuss or describe residual stress fields that already exist, independent of the physical mechanisms that have produced them.

For example, if residual stress states are experimentally determined at $N$ points on a manufactured component, and if reasonable smoothness in residual stress variations can be assumed, how should the residual stresses be interpolated between those points in space? As another example, in a metal forming simulation, can final residual stresses in the formed component be reported using some sequence of orthogonal basis functions that is specifically constructed, {\em in advance}, for the domain of interest?

With the above motivation, we seek self-equilibrating traction-free fields $\vec{\phi_i}$ defined on the finite body of interest, such that linear combinations $\displaystyle \sum_{i=1}^{\infty} a_i \vec{\phi_i}$ can capture any sufficiently regular residual stress field.   

In this paper, we will construct such fields $\vec{\phi_i}$ {\em via} stationary values of a suitable quadratic functional. These fields $\vec{\phi_i}$ will serve as a basis for representing arbitrary residual stress fields in bodies of a given but arbitrary shape, without regard for the physical source of the residual stress. To the best of our knowledge, such a basis has not been presented in the mechanics literature before. The construction of such a basis is not obvious in advance. For example, readers familiar with vibration theory \cite{rayleigh} may be interested to see that the stress fields induced by vibration modes {\em cannot} be used for such $\vec{\phi_i}$, because those modal stresses necessarily satisfy the strain-compatibility conditions of linear elasticity while not satisfying equilibrium, whereas residual stresses necessarily satisfy equilibrium and violate strain-compatibility equations of linear elasticity\footnote{%
	Equilibrium, zero tractions {\em and} compatibility lead to zero stresses as a unique solution.}. To see the latter easily, we can use the result that for a linearly elastic  body subjected to given tractions and body forces, the displacement is unique up to a rigid motion (see page 45, theorem 4.3.1 of \cite{knops}). The solution to zero traction and zero body force is therefore zero stress and zero displacement, unique up to rigid body motions, by the above result. The stress corresponding to rigid body motions is zero. Hence, non-zero residual stresses cannot be caused by compatible strains in linear elasticity.

As motivation for the development that is to follow, in order to demonstrate that vibration mode-induced (or modal) stresses {\em cannot} be used to construct a basis for residual stresses, we choose a candidate residual stress field in an annular domain of inner radius $0.1$ and outer radius $0.3$, with components 
\begin{equation}
\label{demo}
\begin{split}
\sigma_{rr}(r,\theta)=\left(-\frac{0.067}{r^2}+\frac{1.6}{r}-12.833+40 r-41.667 r^2\right) \cos{3\theta},\\
\sigma_{r\theta}(r,\theta)= \left(-\frac{0.022}{r^2} +5.5 -40 r + 75  r^2\right) \sin{3\theta},\\
\sigma_{\theta\theta}(r,\theta)= \left(3.667-40 r+100r^2\right)\cos{3\theta}.
\end{split}
\end{equation}
We will properly motivate and use this stress field later in the paper, after presenting our theory. Here we merely attempt to {\em numerically} approximate the above stress field with the first $N$ modal stresses on this domain, with $1\leq N \leq 50.$ An approximation error $E_N$ (which will be described fully in due course) is plotted against $N$ in figure \ref{En_fvm}. We see that the approximation does not seem to be converging. The implications of figure \ref{En_fvm}, which is given here only for motivation, will be clearer as we present our theory in subsequent sections.
\begin{figure}[h!]
	\centering
	\includegraphics [width=0.5\textwidth]{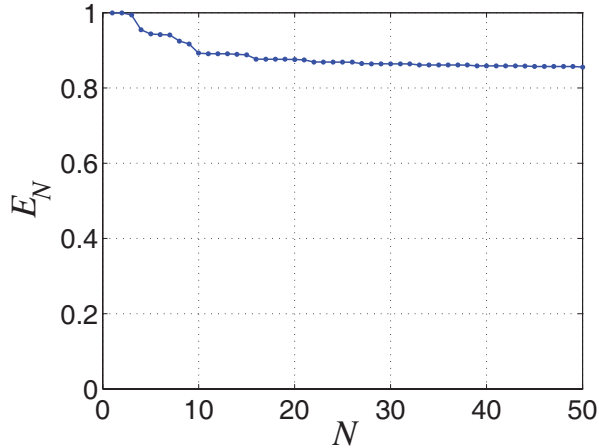}
	\caption{Approximation error versus number of (vibrational) modal stress fields used. Convergence to zero is not apparent and seems unlikely.}	
	\label{En_fvm}
\end{figure}
Readers may note that residual stresses in a component can be either beneficial or harmful, depending on the application. For example, they can impede the growth of surface microcracks and extend fatigue life, or cause warping in manufactured components, respectively. In either case, it is important to characterize a body's residual stress state with sufficient accuracy, both in the bulk and at the surface. Readers may refer to \cite{schajer2013practical,withers2001residual,withers2001residual2} for comprehensive discussions on the origin and measurement of residual stresses from differing sources and at different length scales. Broadly, some common sources of residual stresses are thermal effects \cite{boley,eslami}, inclusions and defects \cite{eshelby1957determination,eshelby1959elastic,eshelby1957elastic,eshelby1966simple,kroener,mura}, and biological growth \cite{goriely,zurlo,swain,epstein}, in addition to prior inelastic deformation. 

A substantial amount of literature on residual stresses pays explicit attention to incompatibility, e.g., through equations of the form $\nabla^4 \phi = \eta$, where nonzero $\eta$ is the source of incompatibility \cite{kroener,mura,zurlo}.
As mentioned above, we directly seek a basis for expanding and interpolating the stress components without approaching the problem through specific choices of $\eta$, i.e., through specific sources or types of incompatibility. We acknowledge here the work of Hoger \cite{hoger1986,hoger1985}, who discussed the general residual stress fields possible in an elastic cylinder, but did not seek to develop a basis for interpolation on arbitrary geometries as we do here. Her papers led to interesting subsequent work on elastic bodies with residual stress, in which the strain energy density is considered to be a function of both the deformation gradient tensor and the initial residual stress. These works, like ours, make no assumptions about the origin of the prescribed residual stress \cite{hoger1986,hoger1985,shams,gower,ISS,shariff,hartig,destrade}. These works, especially those concerned with calculating the optimal residual stress that results in a targeted Cauchy stress (e.g. \cite{ciarletta}), deviate almost immediately from our approach in that they focus on elastic bodies. 

We also distinguish our approach from a more restricted interpolation employed in some destructive measurement techniques for residual stresses. In those techniques, elasticity-based relationships between the measured strain data  \cite{prime1999residual,schajer2007residual,akbari,beghini}, and the tractions that {\em were} acting on surfaces that have since been exposed by cutting \cite{ballard}, are the key considerations. In such measurements, the stress is often interpolated along a single spatial coordinate (like depth of cut), using splines, polynomials, Fourier series, etc. Unlike those interpolants defined on specific line segments, here we will develop self-equilibrating, traction free, tensor valued interpolants for the entire body without appeal to any underlying material constitutive relations. We also acknowledge the challenging problem of inversion of boundary data (displacements, strains) to estimate the residual stress in a three dimensional body \cite{ballard,robertson,acoustic,gao}. The orthonormal basis we develop here, in such applications, may ease the need for statistical regularization \cite{schajerinv,faghidian}. Such potential applications provide yet another motivation for our work.

Finally we distinguish our approach from stress-based formulations derived in linear elasticity using variations of a positive definite functional of the stress gradient \cite{pobedrja1978,pobedrja1980,li,markenscoff}, an example of which is the Beltrami-Michell equation \cite{gurtin}. These formulations, too, refer specifically to linearly elastic materials, and do {\em not} construct basis functions. Our aims are quite different, as explained above. In particular, we will consider variations of a functional involving the stress gradient, which leads to an  eigenvalue problem, which in turn yields a basis we can use.
In the applied mathematics literature, there are similar issues studied using the somewhat simpler Stokes operator from incompressible fluid mechanics (see e.g., \cite{temam}; we will discuss these similarities briefly near the end of the paper). Readers wishing to read a general discussion of the spectral theorem may see, e.g., \cite{halmos}. However, our discussion is less formal, accessible to a broader audience, and resembles the 
development of classical vibration theory \cite{rayleigh}.   

Our basic formulation, though first developed below for two dimensions, is extended to three dimensions at the end of the paper.

We close this introduction with a brief description of the notation used in this paper. The dot product `$\cdot$' between two tensors of the same order represents total tensor contraction. Using Einstein's summation convention,
\[
\vec{A}\cdot \vec{B}= 
\begin{cases}
A_i B_i& \mbox{if } \,\, \vec{A} \,\, \mbox{and} \,\, \vec{B} \,\, \mbox{are vectors}, \\
A_{ij} B_{ij}& \mbox{if } \,\, \vec{A} \,\, \mbox{and} \,\, \vec{B} \,\, \mbox{are second order tensors}, \\
A_{ijk} B_{ijk}& \mbox{if } \,\, \vec{A} \,\, \mbox{and} \,\, \vec{B} \,\, \mbox{are third order tensors},
\end{cases}
\]
where $A_i,A_{ij},A_{ijk}$ etc.\ are the Cartesian components of the tensor $\vec{A}$ (likewise for $\vec{B}$).
For a second order tensor $\vec{A}$, $\mbox{div} \vec{A}$  represents $A_{ij,j} \vec{e}_i $, where a subscript following a comma denotes a partial derivative. For a vector $\vec{v}$, $\vec{A} \vec{v}$ represents $A_{ij}v_j \vec{e}_i$.
The dyadic product $\vec{u} \otimes \vec{v}$ for vectors $\vec{u}$ and $\vec{v}$ is defined by its action on a vector $\vec{w}$ as $(\vec{u} \otimes \vec{v})\vec{w}=(\vec{v}\cdot \vec{w})\vec{u}$.

\section{Problem statement} 
\label{PSsec}
Let $\Omega$ be an open, bounded, sufficiently regular domain in $\mathbb{R}^d$, with $d=2$ for the moment (the extension to $d=3$ is discussed at the end of the paper), with area $|\Omega|$. The unit outward normal $\vec{n}$ at each point on the boundary $\partial \Omega$ is assumed well defined\footnote{%
	Isolated corners can be rounded out using tiny radii, for simplicity. In finite element approximations, the weak formulation allows a piecewise $C^1$ boundary.}.

Let us denote the set of symmetric second order tensor fields by ``Sym''. We define:
\begin{equation}
\label{eqdefS}
\mathcal{S}=\biggl\{\vec{\sigma} \left | \vec{\sigma} \in \text{Sym}, \hspace{1mm} \mbox{div} \, \vec{\sigma} = \vec{0}, \hspace{1mm} \vec{\sigma} \vec{n}=\vec{0}, \hspace{1mm} \int_{\Omega} \vec{\sigma} \cdot \vec{\sigma} \, dA < \infty,\hspace{1mm} \int_{\Omega} \nabla \vec{\sigma} \cdot \nabla \vec{\sigma} \, dA < \infty \right. \biggr\},
\end{equation}
where the five conditions included imply symmetry, equilibrium, zero tractions, square integrability of stresses, and square integrability of stress gradients respectively; and $dA$ is an infinitesimal area element of the domain $\Omega$.
The norm of any $\vec{\sigma} \in \mathcal{{S}}$ is taken to be
\begin{equation}
\label{norm}
\norm{\vec{\sigma}} = \left ( \int_{\Omega} \vec{\sigma} \cdot \vec{\sigma} \, dA \right )^{\frac{1}{2}}.
\end{equation}
Let $\bar{\mathcal{S}}$ be the closure of $\mathcal{S}$. All residual stress fields of interest to us are elements of $\bar{\mathcal{S}}$. 
We seek a sequence of fields $\vec{\phi_i}$ that span $\mathcal{\bar{S}}$.

For orthogonality conditions discussed later in the paper, we use the inner product between two elements $\vec{\sigma_1}$ and $\vec{\sigma_2}$ of $\bar{\mathcal{S}}$ as follows
$$ (\vec{\sigma_1},\vec{\sigma_2})=\int_{\Omega} \vec{\sigma_1} \cdot \vec{\sigma_2} \, dA.$$

\section{Solution approach via an extremization problem}
\label{NZ}
Let us seek stationary points of the functional
\begin{equation}
\label{extreme}
J_0(\vec{\widetilde{\sigma}})=\frac{1}{2}\int_{\Omega} \nabla \vec{\widetilde{\sigma}} \cdot \nabla \vec{\widetilde{\sigma}} \, dA,
\end{equation}
over sufficiently regular\footnote{%
	For the calculus of variations, we will assume continuous second partial derivatives. In
	finite element approximations, the weak solution requires lower smoothness.}
$\vec{\widetilde{\sigma}}$ in $\mathcal{S}$, subject to the normalization constraint \\$\displaystyle \int_{\Omega} \vec{\widetilde{\sigma}} \cdot \vec{\widetilde{\sigma}} \, dA =1$. 

We note that for any nonzero residual stress field, the quantity $J_0$ must be nonzero (see e.g., \cite{hoger1986}).

We will use the calculus of variations \cite{Courant}. Since the constraint $\mbox{div} \, \vec{\widetilde{\sigma}} = \vec{0}$ is defined pointwise in space, we introduce a spatially varying Lagrange multiplier $\vec{\mu}$ for it. Since $\displaystyle \int_{\Omega} \vec{\widetilde{\sigma}} \cdot \vec{\widetilde{\sigma}} \, dA =1$ is a scalar integral constraint, we use a scalar Lagrange multiplier $\displaystyle \frac{\lambda}{2}$ for it.
We then consider variations of
\begin{equation}
\label{e1}
J(\vec{\hat{\sigma}})=\int_{\Omega} \left \{ \frac{1}{2} \, \nabla \vec{\hat{\sigma}} \cdot \nabla \vec{\hat{\sigma}}  - \frac{\lambda}{2} \, \left(\vec{\hat{\sigma}} \cdot \vec{\hat{\sigma}} - \frac{1}{|\Omega|}\right) - \vec{\mu} \cdot (\mbox{div} \, \vec{\hat{\sigma}})
\right \} \, dA,
\end{equation}
where we have used a ``hat'' instead of a ``tilde'' on $\vec{\hat{\sigma}}$ because it belongs to the larger, or less restricted, set
\begin{equation}
\nonumber
\mathcal{R}=\biggl\{\vec{\hat{\sigma}} \left | \vec{\hat{\sigma}} \in \text{Sym}, \hspace{1mm} \vec{\hat{\sigma}} \vec{n}=\vec{0},  \int_{\Omega} \vec{\hat{\sigma}} \cdot \vec{\hat{\sigma}} \, dA < \infty, \int_{\Omega} \nabla \vec{\hat{\sigma}} \cdot \nabla \vec{\hat{\sigma}} \, dA < \infty \right. \biggr\}.
\end{equation}

If a stationary point of Eq.\ \ref{e1} is $\vec{\sigma}$ then, for arbitrary infinitesimal variations $\vec{\zeta} \in \mathcal{R}$, we must have
\begin{equation}
\nonumber
\int_{\Omega} \left\{ \nabla \vec{\sigma} \cdot \nabla \vec{\zeta}  - \lambda \vec{\sigma} \cdot \vec{\zeta} - \vec{\mu} \cdot (\mbox{div} \,\vec{\zeta}) \right \} \, dA = 0.
\end{equation}
Using integration by parts and the divergence theorem, we obtain
\begin{equation}
\label{ev1}
\int_{\partial \Omega} \left \{(\nabla \vec{\sigma} \circ \vec{\zeta}) \cdot \vec{n} - \vec{\mu} \cdot (\vec{\zeta} \vec{n}) \right \} ds - \int_{\Omega} \left \{ \Delta \vec{\sigma} - \nabla \vec{\mu} + \lambda \vec{\sigma} \right \} \cdot \vec{\zeta} \, dA = 0,
\end{equation}
where $\vec{A} \circ \vec{B}=A_{ijk} B_{ij} \vec{e}_k$ in Cartesian coordinates for a third order tensor $\vec{A}$, second order tensor $\vec{B}$, and unit vectors
$\vec{e}_k$.

In Eq.\ \ref{ev1}, since $\vec{\zeta} \in \mathcal{R}$, $\vec{\zeta} \vec{n}$ on $\partial \Omega$ is zero, yielding
\begin{equation}
\label{two_terms}
\int_{\partial \Omega} (\nabla \vec{\sigma} \circ \vec{\zeta}) \cdot \vec{n} \, ds - \int_{\Omega} \left \{ \Delta \vec{\sigma} - \nabla \vec{\mu} + \lambda \vec{\sigma} \right \} \cdot \vec{\zeta} \, dA = 0.
\end{equation}
By considering the set of $\vec{\zeta}$ which are zero on $\partial \Omega$, we conclude that\footnote{%
	Since $\vec{\zeta}$ is symmetric, by localizing it near any $\vec{x} \in \Omega$
	we conclude that the integrand at $\vec{x}$ is skew symmetric.}
$$ -\Delta \vec{\sigma} + \nabla \vec{\mu} - \lambda \vec{\sigma} = \vec{R}  \hspace{5mm} \text{in} \hspace{1mm} \Omega,$$
where $\vec{R}$ is some skew symmetric second order tensor field; and where the scalar eigenvalue $\lambda$ and the vector field $\vec{\mu}$ need to be determined along with $\vec{\sigma}$. Adding the above equation to its transpose and dividing by two,
\begin{equation}
\label{q1q2} -\Delta \vec{\sigma} + \nabla_s \vec{\mu} - \lambda \vec{\sigma} = \vec{0} \hspace{5mm} \text{in} \hspace{1mm} \Omega,
\end{equation}
where
\begin{equation}
\nonumber
\nabla_s \vec{\mu} = \frac{\nabla \vec{\mu} + \left ( \nabla \vec{\mu} \right )^T}{2}.
\end{equation}
Equation \ref{two_terms} reduces to the surface integral alone, i.e.,
\begin{equation}
\label{natural}
\int_{\partial \Omega} (\nabla \vec{\sigma} \circ \vec{\zeta}) \cdot \vec{n} \, ds = 0.
\end{equation}

Considering $\vec{\zeta}$ on the boundary, at each point we have $\vec{\zeta n}=\vec{0}$, so $\vec{n}$ is an eigenvector of $\vec{\zeta}$. Since $\vec{\zeta}$ is symmetric, the local tangent vector $\vec{t}$ must be the other eigenvector (we are in two dimensions).
It follows that we can consider $\vec{\zeta} = \kappa(s) \vec{t} \otimes \vec{t}$ for any scalar $\kappa(s)$ varying arbitrarily along the boundary.
The arbitrariness of $\kappa(s)$ implies that
\begin{equation} \label{q2q3} ( \nabla \vec{\sigma} \circ \{ \vec{t} \otimes \vec{t} \} ) \cdot \vec{n} = 0 \end{equation}
everywhere on the boundary $\partial \Omega$.
Using indicial notation,
\begin{equation}
\label{q3q4}
( \nabla \vec{\sigma} \circ \{ \vec{t} \otimes \vec{t} \} ) \cdot \vec{n} = \sigma_{ij,k} n_k t_i t_j= \nabla_n \vec{\sigma} \cdot (\vec{t} \otimes \vec{t}) = 0
\end{equation}
everywhere on the boundary, where $\nabla_n$ denotes the derivative in the locally normal direction.
Less formally, the normal gradient of the circumferential tensile stress is zero at the boundary. If the domain is circular, this circumferential stress is the hoop stress.

Finally, variation of the Lagrange multiplier $\vec{\mu}$ gives the equilibrium condition
$$ \mbox{div} \, \vec{\sigma}=\vec{0},$$
and variation of the Lagrange multiplier $\displaystyle \frac{\lambda}{2}$ gives
$$ \int_{\Omega} \vec{\sigma} \cdot \vec{\sigma} \, dA = 1.$$

To summarize, any sufficiently regular unit-norm stationary point of $J_0$ in $\mathcal{{S}}$, assuming for simplicity that one exists, is a solution to 
the following eigenvalue problem:
\begin{equation}
\label{three_eqns}
\begin{array}{cccl}
-\Delta \vec{\sigma} + \nabla_s \vec{\mu}  = \lambda \vec{\sigma} & \text{ and } & \mbox{div} \, \vec{\sigma} = \vec{0} & \text{ in} \hspace{1mm} \Omega,\\
\vec{\sigma n} = \vec{0} & \text{ and } & \nabla_n \vec{\sigma} \cdot (\vec{t} \otimes \vec{t}) = 0 & \text{ on} \hspace{1mm} \partial \Omega.
\end{array}
\end{equation}
This eigenvalue problem can be solved on arbitrary domains using the finite element method, and we will present some such solutions later in this paper. For the simple case of an annular domain, it can also be solved as a two-point boundary value problem using ODE solvers after separation of variables, and we will present such solutions as well, obtaining complete agreement with finite element solutions.

Proceeding now with our theoretical development, our primary claim is that the sequence of eigenfunctions $\vec{\sigma}_k$, computed for a given domain $\Omega$, forms a basis for $\bar{\mathcal{S}}$ defined on $\Omega$. Any state of residual stress in $\bar{\mathcal{S}}$ can be expressed as a linear combination of these basis functions. We shall henceforth denote these stress-eigenfunctions as $\vec{\phi}$.

\section{Orthonormality of the eigenfunctions}
Let $\lambda$ be an eigenvalue, and $\vec{\phi}$ and $\vec{\mu}$ represent the corresponding eigenfunction. Let $\vec{\sigma}$ be any element of ${\mathcal{S}}$
(recall Eq.\ \ref{eqdefS}).
Consider the inner product of the first equation in \ref{three_eqns}
with $\vec{\sigma}$, i.e.,
\begin{equation}
\label{ip}
\int_{\Omega} \left ( - \Delta \vec{\phi} + \nabla_s \vec{\mu}  - \lambda \vec{\phi} \right )  \cdot \vec{\sigma} \, dA = 0,
\end{equation} 
which reduces to (see appendix \ref{math_reductions})

\begin{equation}
\label{ip2} \int_{\Omega} \left ( \nabla \vec{\phi} \cdot \nabla \vec{\sigma}  - \lambda \vec{\phi} \cdot \vec{\sigma} \right )  \, dA = 0
\end{equation}
for any eigenvalue-eigenfunction pair $(\lambda, \vec{\phi})$ and any $\vec{\sigma} \in {\mathcal S}$.

Now let $(\lambda_p, \vec{\phi}_p, \vec{\mu}_p)$ and $(\lambda_q, \vec{\phi}_q, \vec{\mu}_q)$ be two distinct eigenvalue-eigenvector sets of Eq.\ \ref{three_eqns}.
By Eq.\ \ref{ip2},
\begin{equation}
\label{Rp1}
\begin{split}
\int_{\Omega} \nabla \vec{\phi}_p \cdot \nabla \vec{\phi}_q \, dA = \lambda_p \int_{\Omega} \vec{\phi}_p \cdot \vec{\phi}_q \, dA, \\
\int_{\Omega} \nabla \vec{\phi}_p \cdot \nabla \vec{\phi}_q \, dA = \lambda_q \int_{\Omega} \vec{\phi}_p \cdot \vec{\phi}_q \, dA,
\end{split}
\end{equation}
and
if $\lambda_p \neq \lambda_q$, then
\begin{equation}
\label{ort1}
\int_{\Omega} \vec{\phi}_p \cdot \vec{\phi}_q \, dA =0 \mbox{ and } \int_{\Omega} \nabla \vec{\phi}_p \cdot \nabla \vec{\phi}_q \, dA =0.
\end{equation}
If $\lambda_p = \lambda_q$ but $\vec{\phi}_p \neq \vec{\phi}_q$, then we can choose
$\vec{\phi}_p$ and $\vec{\phi}_q$ to be orthogonal, and Eq.\ \ref{ort1} still holds.
Finally, if $\lambda_p = \lambda_q$ and 
$\vec{\phi}_p = \vec{\phi}_q$ but $\vec{\mu}_p \neq \vec{\mu}_q$, then $\nabla_s \vec{\mu}_p = \nabla_s \vec{\mu}_q$, and there is no distinction between these two cases.

Following arguments used by \cite{rayleigh}, we note that the eigenvalues $\lambda$ are real and positive. To obtain a contradiction, if $\lambda_p$ is complex with corresponding complex eigenfunction $\vec{\phi}_p$, then by the linearity of
Eq.\ \ref{three_eqns} it follows that their complex conjugates $\overline{\lambda_p} = \lambda_q$ and $\overline{\vec{\phi}_p} = \vec{\phi}_q$ give another solution pair. Using these two eigenfunctions in either of Eqs.\ \ref{Rp1}, we obtain a contradiction; so $\lambda$ is real. The eigenfunctions are real as well. Next, using the same $\vec{\phi}$ twice (i.e., $p=q$), we conclude that $\lambda > 0$ because the left hand side is strictly positive for any nonzero residual stress.  

We thus have an orthogonal sequence of eigenfunctions, satisfying Eq.\ \ref{ort1} whenever $p \neq q$.
The orthogonal sequence of stress eigenfunctions $\vec{\phi}_p$ is assumed to be normalized such that
$$\int_{\Omega} \vec{\phi}_p \cdot \vec{\phi}_p \, dA =1, \quad p = 1, 2, 3, \cdots$$
to obtain an {\em orthonormal} sequence, with
$$\int_{\Omega} \nabla \vec{\phi}_p \cdot \nabla \vec{\phi}_p \, dA = \lambda_p.$$
We can arrange this sequence\footnote{%
	\label{ftnt1} In some cases we may restrict attention to a subset of eigenfunctions. For an annular domain, for example, we may sometimes consider only
	eigenfunctions with a fixed circumferential
	wave number (e.g., $m=3$).}
simply in order of increasing $\lambda_p$.

\section{Basis of $\mathcal{\bar{S}}$} \label{basis_sec} 
Consider the sequence $(\lambda_p, \vec{\phi}_p, \vec{\mu}_p)$, $p = 1, 2, \cdots$.
There are infinitely many such eigenvalue-eigenfunction pairs, i.e., the sequence is not finite. For proof, we argue by contradiction.

Assume that only a finite number $N$ of such eigenvalue-eigenfunction pairs exist.

Let ${\mathcal S}_N$ be the subspace of $\mathcal{{S}}$ spanned by the finite sequence $\left \{ \vec{\phi}_p \right \}$, $p = 1, 2, \cdots, N$. Let
${\mathcal S}_{N\perp}$ be the orthogonal complement of ${\mathcal S}_N$ in ${\mathcal S}$. Let us now
extremize $J_0$ (recall Eq.\ \ref{extreme}) within ${\mathcal S}_{N\perp}$. To the extremizer $\vec{\sigma}$, restriction to ${\mathcal S}_{N\perp}$ adds $N$ integral constraints to the previous extremization problem, namely
\begin{equation}
\label{ortnew}
\int_{\Omega} \vec{\phi}_p \cdot \vec{\sigma} \, dA = 0, \quad p = 1, 2, \cdots, N,
\end{equation}
for which we introduce $N$ new scalar Lagrange multipliers, $\nu_1, \nu_2, \cdots, \nu_N$,
and obtain the new equations (recall Eq.\ \ref{three_eqns})
\begin{equation}
\label{three_eqnsa}
\begin{array}{cccl}
-\Delta \vec{\sigma} + \nabla_s \vec{\mu}  = \lambda \vec{\sigma} + \sum_{p=1}^N \nu_p \vec{\phi}_p & \text{ and } & \mbox{div} \, \vec{\sigma} = \vec{0} & \text{ in} \hspace{1mm} \Omega,\\
\vec{\sigma n} = \vec{0} & \text{ and } & \nabla_n \vec{\sigma} \cdot (\vec{t} \otimes \vec{t}) = 0 & \text{ on} \hspace{1mm} \partial \Omega,
\end{array}
\end{equation}
along with Eq.\ \ref{ortnew}. Since the new extremization problem is posed on a nonempty subspace, it is reasonable to suppose that it has at least one solution $\tilde{\vec{\sigma}} \in {\mathcal S}_{N\perp}$ with associated $\tilde{\vec{\mu}}$, $\tilde{\lambda}$ and $\tilde{\nu_p}$, i.e.,
\begin{equation}
\label{nu1} -\Delta \tilde{\vec{\sigma}} + \nabla_s \tilde{\vec{\mu}}  = \tilde{\lambda} \tilde{\vec{\sigma}} + \sum_{p=1}^N \tilde{\nu}_p \vec{\phi}_p.
\end{equation}
The proof of existence of an extremizer in ${\mathcal S}_{N\perp}$ is technical and is presented in appendix \ref{snperp}.

Consider any eigenfunction $\vec{\phi}_k$, $1 \le k \le N$. Compute the inner product of
Eq.\ \ref{nu1} with $\vec{\phi}_k$. By the reasoning in appendix \ref{math_reductions}, the $\nabla_s \tilde{\vec{\mu}}$ term drops out. By Eq.\ \ref{ortnew}, the $\tilde{\lambda} \tilde{\vec{\sigma}}$ term drops out. By orthonormality of the eigenfunctions obtained so far, $\sum_{p=1}^N \tilde{\nu}_p \vec{\phi}_p$ contributes just $\tilde{\nu}_k$. By the manipulations that led to Eq.\ \ref{ip2}, the inner product thus becomes
\begin{equation}
\label{nu2}
\int_{\Omega}  \nabla \tilde{\vec{\sigma}} \cdot \nabla  \vec{\phi}_k  \, dA = \tilde{\nu}_k .
\end{equation}
However, since $\tilde{\vec{\sigma}}$ is an element of ${\cal S}$ and also orthogonal to $\vec{\phi}_k$, Eq.\ \ref{ip2} shows that
\begin{equation}
\label{nu3}
\int_{\Omega} \nabla  \vec{\phi}_k  \cdot \nabla \tilde{\vec{\sigma}}  \, dA = 0.
\end{equation}
Thus $\tilde{\nu}_k = 0$ for $1 \le k \le N$. Inserting these zeros in Eq.\ \ref{three_eqnsa}
we obtain exactly Eq.\ \ref{three_eqns}, which shows that the new solution merely adds another element to the existing sequence.
We conclude that there are infinitely many eigenfunctions.

It can now be shown that these eigenfunctions form a basis for
$\bar{\mathcal{S}}$, as follows.
Let ${\mathcal S}_{\infty}$ be the subspace spanned by the infinite sequence $\left \{ \vec{\phi}_p \right \}$, $p = 1, 2, \cdots$, with all eigenfunctions included.
If indeed there is an element of ${\mathcal{S}}$ that is not in ${\mathcal S}_{\infty}$, then arguments in the same spirit as above establish that this element merely adds
one more eigenfunction to the sequence, giving a contradiction (for details, see appendix \ref{infinity}). Finally, since every element of ${\mathcal{S}}$ can be expressed to arbitrary closeness in the $L^2$ norm as a linear combination
of our basis functions, so can every element of the closure $\bar{\mathcal{S}}$.
We conclude that our eigenfunctions provide a basis for residual stress states, as claimed\footnote{ One might, in some cases, consider self-equilibrating stresses under prescribed non-zero boundary tractions. In  such cases, the total stress $\vec{\sigma}$ can be written as the sum of a general traction free residual stress $\vec{\sigma}_h$ and {\em any} particular self-equilibrating $\vec{\sigma}_p$,  consistent with the applied tractions, and computed in any way we like. Our basis can then be used to represent $\vec{\sigma}_h$.}. Numerical examples presented below will provide ample empirical evidence of the same.

\section{Computation of eigenfunctions} 
In general, it is not possible to solve the eigenvalue problem of Eq.\ \ref{three_eqns} analytically. We have first computed some finite element solutions
for understanding, and then computed a large number of eigenfunctions for an annular domain using a semi-numerical approach. These are presented in the next two subsections.

\subsection{Finite element solutions}
To solve the eigenvalue problem using finite elements, we discretise the domain using eight-noded quadrilateral serendipity elements, such that the stress components are piecewise cubic \cite{zienkiewicz}. We note from Eq.\ \ref{three_eqns} that lower smoothness is required for $\mu_x$ and $\mu_y$, and we approximate them as piecewise constant. Details of the finite element procedure are given in appendix \ref{FEM}. 

Some eigenfunctions thus obtained are shown for three domains: an annular domain ($r_i = 0.1$, $r_o = 0.3$), a unit square, and a somewhat arbitrarily shaped, comparably sized planar domain: see figures \ref{annular_modes}, \ref{square_modes} and \ref{arbitrary_shapes} respectively. The mesh used was refined until the first several eigenvalue estimates were varying within tiny fractions of one percent.
\begin{figure}[t!]
	\centering
	\includegraphics [width=1\textwidth]{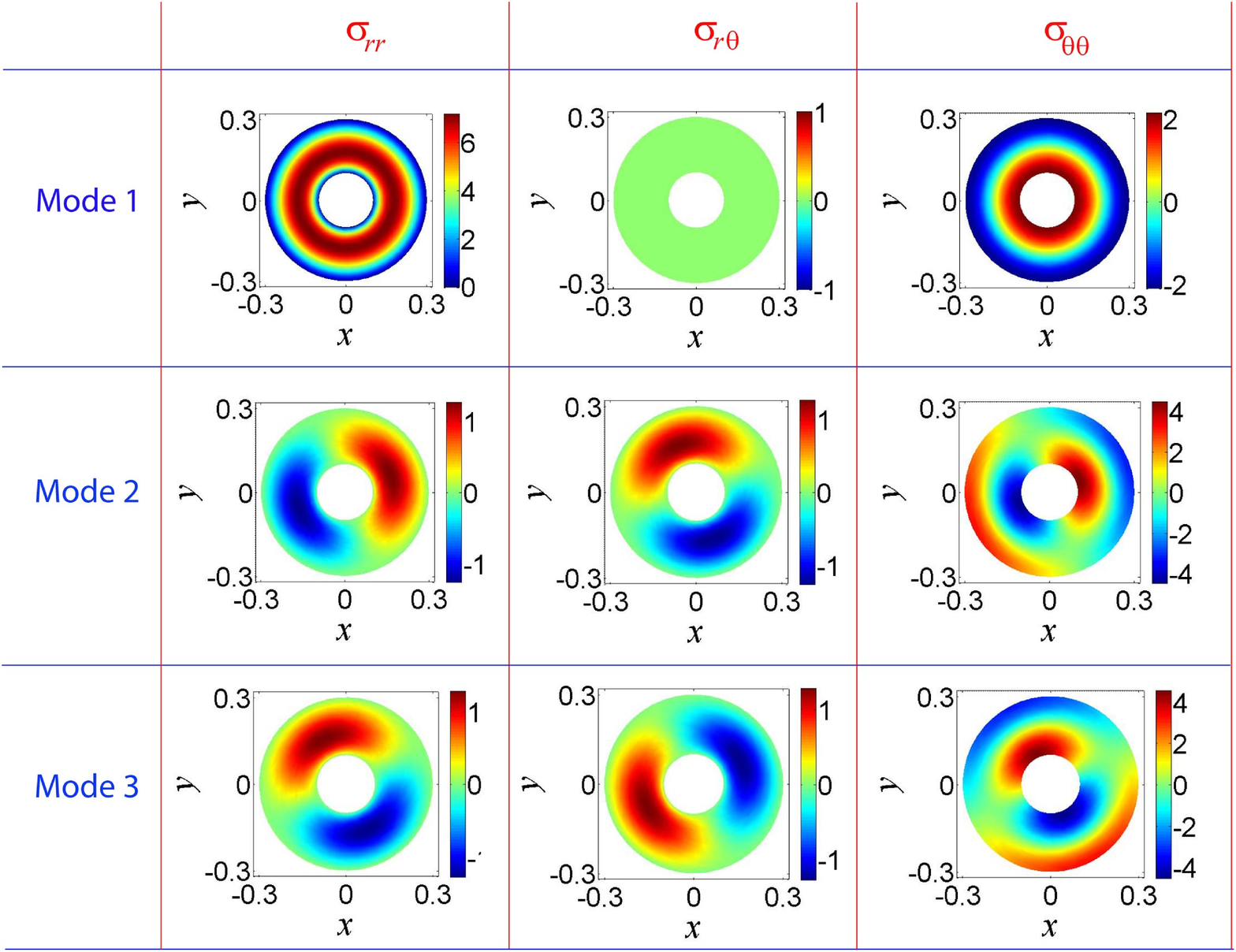}
	\caption{First three eigenfunctions for an annular domain; $\lambda_1=293.34$, $\lambda_2=\lambda_3=348.76$.}	
	\label{annular_modes}
	\vspace{5mm}
	\includegraphics [width=0.9\textwidth]{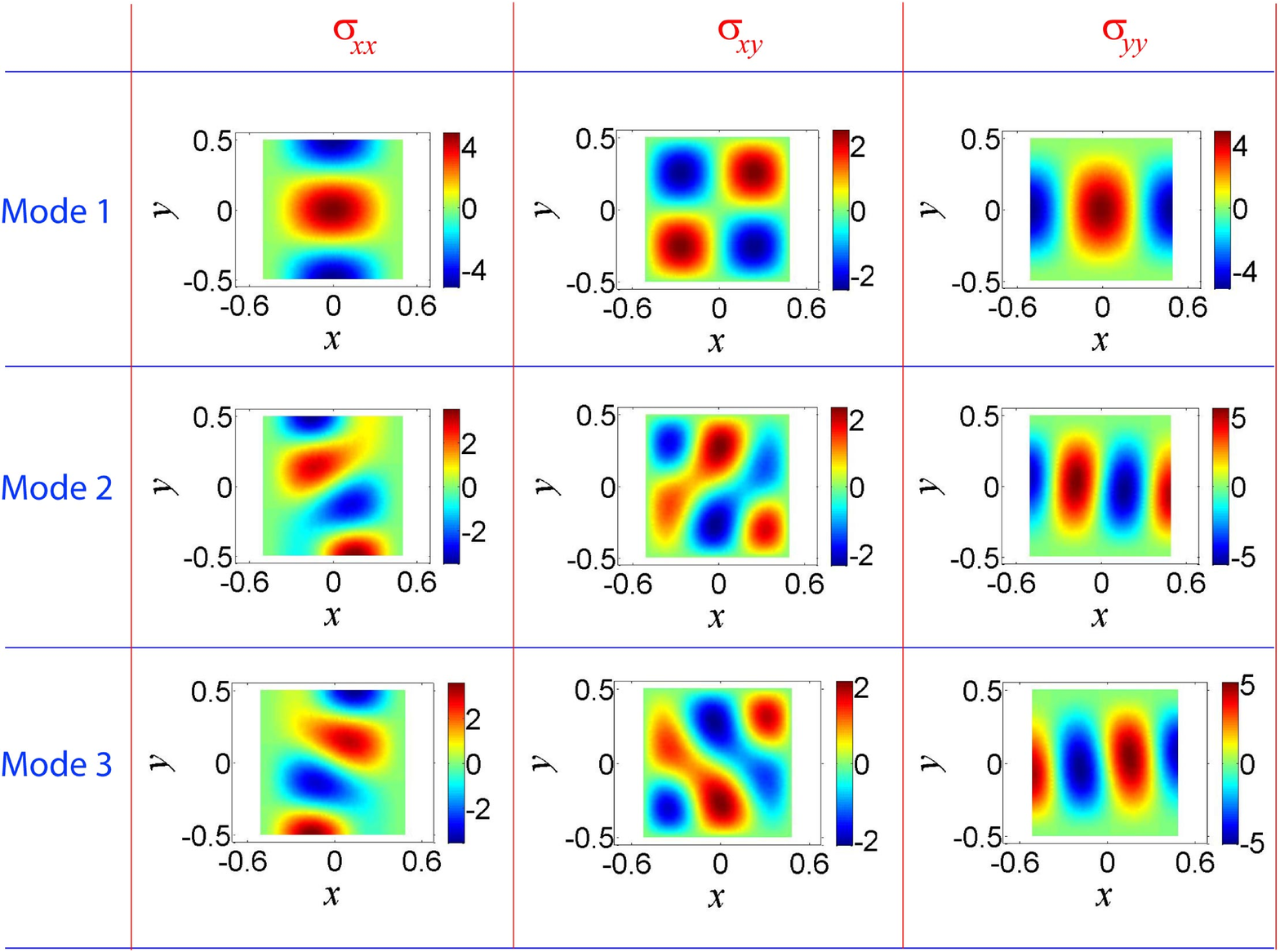}
	\caption{First three eigenfunctions for a square domain; $\lambda_1=59.12$, $\lambda_2 = \lambda_3=103.98$.}	
	\label{square_modes}
\end{figure}
\begin{figure}[t!]
	\centering
	\includegraphics [width=0.9\textwidth]{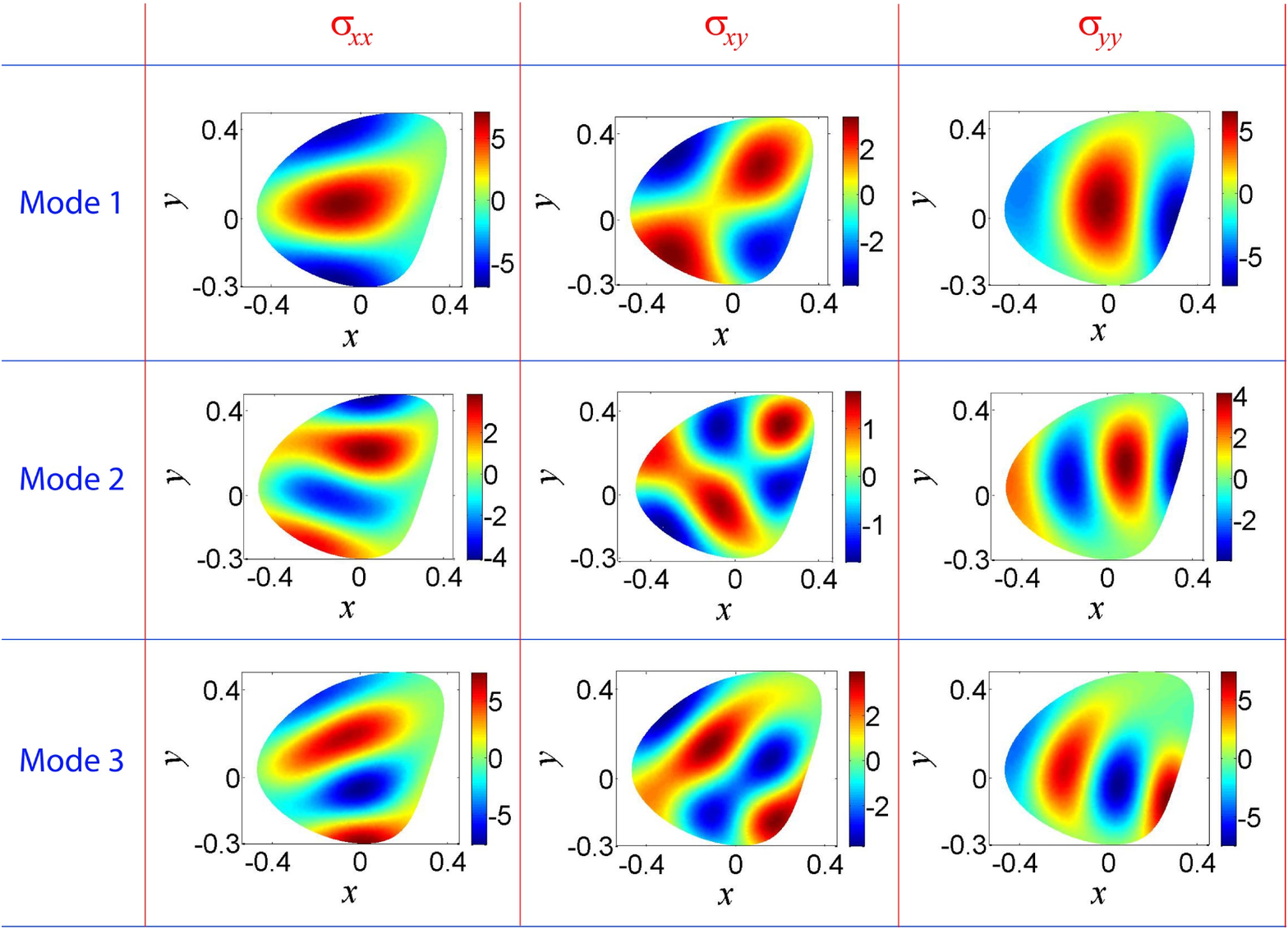}
	\caption{First three eigenfunctions for an arbitrarily shaped domain; $\lambda_1=99.50$, $\lambda_2=172.78$, $\lambda_3=200.27$.}	
	\label{arbitrary_shapes}
\end{figure}

\setlength{\textfloatsep}{5pt}
\subsection{Semi-analytical solutions for an annular domain}
On an annular domain, upon choosing a circumferential wavenumber $m$, the eigenvalue problem retains one independent variable ($r$). Many eigenfunctions can then be computed with great accuracy using a large number of $r$-points. For the numerical examples of stress interpolation presented in the next section, therefore, we use such eigenfunctions.

We consider an annular domain $\Omega$, centered at the origin, with inner radius $r_i = 0.1$ and outer radius $r_o = 0.3$. We denote the fields $\vec{\sigma}$ and $\vec{\mu}$ in polar coordinates as
$$ \vec{\sigma} = \sigma_{rr} (r,\theta) \, \vec{e_r}\otimes \vec{e_r} + \sigma_{r\theta} (r,\theta) \, \left(\vec{e_{r}}\otimes \vec{e_{\theta}} +  \vec{e_{\theta}}\otimes \vec{e_{r}}\right) + \sigma_{\theta\theta} (r,\theta) \, \vec{e_{\theta}}\otimes \vec{e_{\theta}}, $$
$$ \vec{\mu}=\mu_r (r,\theta) \vec{e_r} + \mu_{\theta} (r,\theta) \vec{e_{\theta}}. $$ 
The equation $-\Delta \vec{\sigma} + \nabla_s \vec{\mu} = \lambda \vec{\sigma}$
yields
$$\frac{\partial^2 \sigma_{rr}}{\partial r^2} + \frac{1}{r^2}\frac{\partial^2 \sigma_{rr}}{\partial \theta^2}+\frac{1}{r} \frac{\partial \sigma_{rr}}{\partial r} -\frac{4}{r^2} \frac{\partial \sigma_{r \theta}}{\partial \theta} -\frac{2 \sigma_{rr}}{r^2}  + \frac{2 \sigma_{\theta \theta}}{r^2}  - \frac{\partial \mu_r}{\partial r} = \lambda \sigma_{rr},$$
$$ \frac{\partial^2 \sigma_{\theta \theta}}{\partial r^2} + \frac{1}{r^2}\frac{\partial^2 \sigma_{\theta \theta}}{\partial \theta^2} + \frac{1}{r} \frac{\partial \sigma_{\theta \theta}}{\partial r} + \frac{4}{r^2} \frac{\partial \sigma_{r \theta}}{\partial \theta} + \frac{2 \sigma_{rr}}{r^2} - \frac{2 \sigma_{\theta \theta}}{r^2}-\frac{1}{r} \frac{\partial \mu_{\theta}}{\partial \theta}- \frac{\mu_r}{r} =\lambda \sigma_{\theta \theta},$$
and
$$ \frac{\partial^2 \sigma_{r \theta}}{\partial r^2} + \frac{1}{r^2}\frac{\partial^2 \sigma_{r \theta}}{\partial \theta^2} + \frac{1}{r} \frac{\partial \sigma_{r\theta}}{\partial r} + \frac{2}{r^2} \frac{\partial \sigma_{rr}}{\partial \theta} - \frac{2}{r^2} \frac{\partial \sigma_{\theta \theta}}{\partial \theta} -\frac{4 \sigma_{r \theta}}{r^2} + \frac{\mu_{\theta}}{2 r} - \frac{1}{2r} \frac{\partial \mu_r}{\partial \theta} -\frac{1}{2} \frac{\partial \mu_{\theta}}{\partial r} =\lambda \sigma_{r \theta}.$$
The equilibrium equation $ \mbox{div} \, \vec{\sigma} = \vec{0} $
becomes
$$ \frac{\partial \sigma_{rr}}{\partial r} + \frac{1}{r} \frac{\partial \sigma_{r \theta}}{\partial \theta} + \frac{\sigma_{rr} - \sigma_{\theta \theta}}{r}=0$$ 
and
$$ \frac{\partial \sigma_{r \theta}}{\partial r} + \frac{1}{r} \frac{\partial \sigma_{\theta \theta}}{\partial \theta} + \frac{2 \sigma_{r \theta}}{r}=0.$$
The boundary condition $ \vec{\sigma} \vec{n} = \vec{0}$ 
gives four scalar equations,
$$ \sigma_{rr}=0 \mbox{ at }  r=r_i \mbox{ and } r_o; \quad \sigma_{r\theta}=0 \mbox{ at }  r=r_i \mbox{ and } r_o.$$  
The natural boundary condition $ \nabla_n \vec{\sigma} \cdot (\vec{t} \otimes \vec{t}) = 0$ 
gives two scalar equations,
$$ \frac{\partial \sigma_{\theta \theta}}{\partial r}=0 \mbox{ at }  r=r_i \mbox{ and } r_o. $$
We now choose a wavenumber $m$ (any whole number).
Substituting
\begin{equation}
\label{eqsub}
\begin{split}
\sigma_{rr}=\widetilde{\sigma}_{rr}(r) \cos{m \theta}, \, \sigma_{\theta \theta}=\widetilde{\sigma}_{\theta \theta}(r) \cos{m \theta}, \, \sigma_{r\theta}=\widetilde{\sigma}_{r\theta}(r) \sin{m \theta},\\ \mu_r=\widetilde{\mu}_r (r) \cos{m \theta} \,\, \mbox{ and } \,\, \mu_{\theta}=\widetilde{\mu}_{\theta} (r) \sin{m \theta}
\end{split}
\end{equation}
in the above partial differential equations (PDEs), we obtain the following five ordinary differential equations (ODEs):
\begin{align}
\label{5eqns}
\begin{split}
\widetilde{\sigma}_{rr}''-\frac{m^2}{r^2} \widetilde{\sigma}_{rr} +\frac{\widetilde{\sigma}_{rr}'}{r}-\frac{4m\widetilde{\sigma}_{r \theta}}{r^2}-\frac{2\widetilde{\sigma}_{rr}}{r^2}+\frac{2\widetilde{\sigma}_{\theta \theta}}{r^2}-\widetilde{\mu}_r'+\lambda \widetilde{\sigma}_{rr}=0,\\
\widetilde{\sigma}_{\theta \theta}''-\frac{m^2}{r^2} \widetilde{\sigma}_{\theta \theta} +\frac{\widetilde{\sigma}_{\theta \theta}'}{r} + \frac{4m\widetilde{\sigma}_{r \theta}}{r^2}+\frac{2\widetilde{\sigma}_{rr}}{r^2}-\frac{2\widetilde{\sigma}_{\theta \theta}}{r^2}-\frac{m \widetilde{\mu}_{\theta}}{r}-\frac{\widetilde{\mu}_r}{r}+\lambda \widetilde{\sigma}_{\theta \theta}=0, \\
\widetilde{\sigma}_{r \theta}''-\frac{m^2}{r^2} \widetilde{\sigma}_{r \theta} +\frac{\widetilde{\sigma}_{r \theta}'}{r} -\frac{2m\widetilde{\sigma}_{rr}}{r^2}+\frac{2m\widetilde{\sigma}_{\theta \theta}}{r^2}-\frac{4 \widetilde{\sigma}_{r\theta}}{r^2}+\frac{\widetilde{\mu}_{\theta}}{2r}+\frac{m\widetilde{\mu}_r}{2r}-\frac{\widetilde{\mu}_{\theta}'}{2}+\lambda \widetilde{\sigma}_{r \theta}=0, \\
\widetilde{\sigma}_{rr}'+\frac{m\widetilde{\sigma}_{r\theta}}{r}+\frac{\widetilde{\sigma}_{rr}-\widetilde{\sigma}_{\theta \theta}}{r}=0, \\
\widetilde{\sigma}_{r \theta}'-\frac{m\widetilde{\sigma}_{\theta\theta}}{r}+\frac{2\widetilde{\sigma}_{r\theta}}{r}=0,
\end{split}
\end{align}
where primes denote $r$-derivatives, and we have suppressed the $r$-dependence of the field variables. Equations \ref{5eqns} have the structure of differential algebraic equations, and the last two were differentiated once each for setting up as a system of first order ODEs.
Introducing the new variable $\vartheta$, we obtain the following six first order ODEs (with $\widetilde{\sigma}_{rr}''$ and $\widetilde{\sigma}_{r \theta}''$ eliminated):
\begin{equation}
\label{6eqns}
\begin{split}
\widetilde{\sigma}_{rr}'=-\frac{\widetilde{\sigma}_{rr}}{r}-\frac{m \widetilde{\sigma}_{r\theta}}{r} + \frac{\widetilde{\sigma}_{\theta\theta}}{r},\\
\widetilde{\sigma}_{r\theta}'=-\frac{2\widetilde{\sigma}_{r\theta}}{r}+\frac{m\widetilde{\sigma}_{\theta\theta}}{r},\\
\widetilde{\sigma}_{\theta\theta}'=\vartheta,\\
\vartheta'=\frac{m^2 \widetilde{\sigma}_{\theta \theta}}{r^2}-\frac{\vartheta}{r}-\frac{4m\widetilde{\sigma}_{r\theta}}{r^2}-\frac{2\widetilde{\sigma}_{rr}}{r^2}+\frac{2\widetilde{\sigma}_{\theta\theta}}{r^2}+\frac{\widetilde{\mu}_r}{r}+\frac{m \widetilde{\mu}_{\theta}}{r}-\lambda \widetilde{\sigma}_{\theta\theta},\\	\widetilde{\mu}_r'=-\frac{(m^2-1)\widetilde{\sigma}_{\theta\theta}}{r^2}-\frac{m\widetilde{\sigma}_{r\theta}}{r^2}+\frac{\vartheta}{r}-\frac{(m^2+1)\widetilde{\sigma}_{rr}}{r^2}+\lambda \widetilde{\sigma}_{rr},\\
\widetilde{\mu}_{\theta}'=\frac{2m\vartheta}{r}-\frac{2m^2\widetilde{\sigma}_{r\theta}}{r^2}-\frac{4m\widetilde{\sigma}_{rr}}{r^2}+\frac{m\widetilde{\mu}_{r}}{r}+\frac{\widetilde{\mu}_{\theta}}{r}+2 \lambda \widetilde{\sigma}_{r\theta}.
\end{split}
\end{equation}
We already have six homogeneous boundary conditions, three at $r_i$ and three at $r_o$.
Nonzero solutions will be possible only for specific discrete values of $\lambda$, which must also be determined as part of the solution; but the eigenfunctions will be arbitrarily scalable.
To make things definite, we introduce a normalizing boundary condition,
$$ \widetilde{\sigma}_{\theta \theta}=1\mbox{ at } r=r_i.$$
We have solved the above eigenvalue problem repeatedly using Matlab's built-in routine `bvp4c' as well as alternative numerical routines of our own (based on the Newton-Raphson method with numerically estimated Jacobians), for our chosen $m$. Each solution obtained gives one eigenvalue-eigenfunction pair. Initial values must be chosen to ensure that all eigenfunctions are obtained and none missed. The foregoing finite element solutions help identify the first one or two for any $m$;
for the higher modes, plots of $\lambda_p$ against $p$ help to identify missed eigenvalues, as does
counting the number of zero crossings of $\widetilde{\sigma}_{\theta \theta}$.

For demonstration, we choose $m=3$. The radial variation of stress component functions $\widetilde{\sigma}_{rr}$, $\widetilde{\sigma}_{r \theta}$ and $\widetilde{\sigma}_{\theta \theta}$ for the first three eigenfunctions are shown in figure \ref{m3}. 
\begin{figure}[t!]
	\centering
	\includegraphics [scale=0.4]{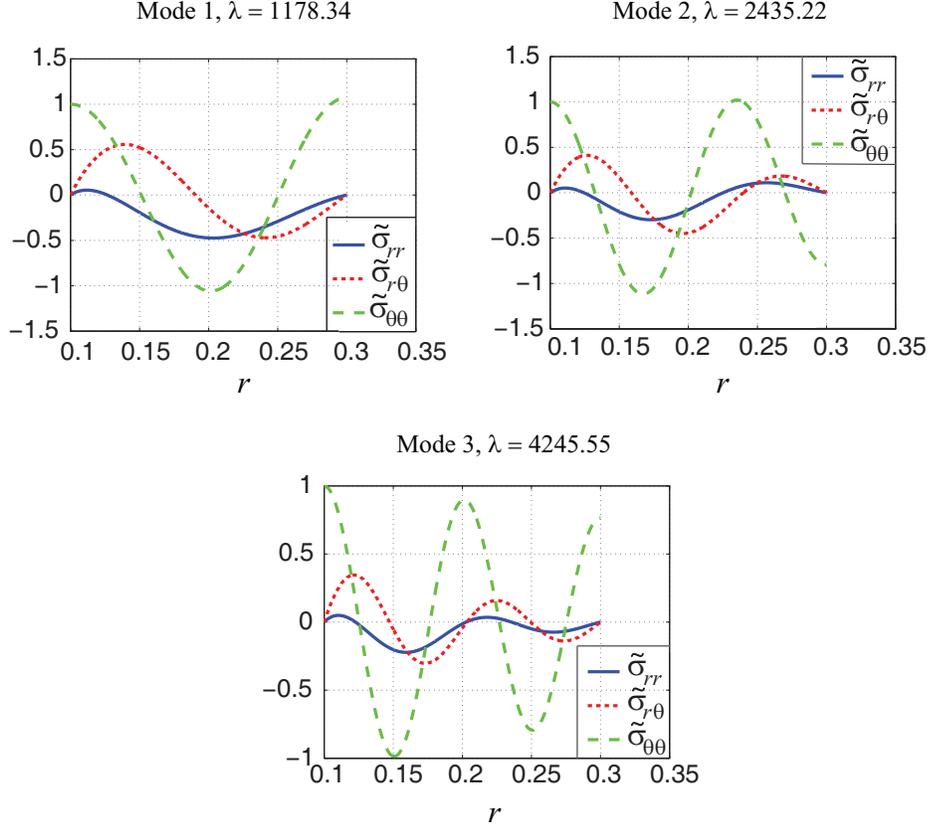}
	\caption{First three eigenfunctions for the annular domain, with $m=3$. In the finite element solution, these are mode numbers (9,10), (26,27) and (55,56). The eigenvalues from the semi-analytical approach and FEM match near-perfectly.}
	\label{m3}
\end{figure}

Finally, for $m>0$, all eigenvalues appear in pairs; and for each eigenfunction obtained above, we can obtain another one by taking the partial derivative with respect
to $\theta$ in Eq.\ \ref{eqsub} and then dividing by $m$.

With this semi-analytical approach on the annular domain, for given $m$, we can accurately compute, say, 50 eigenfunctions. Obtaining the same number
of $m=3$ eigenfunctions from the finite element approach would require computation of thousands of eigenfunctions with many different wave numbers.

In the above calculation, we have not normalized the eigenfunctions to unit norm, but that has no real consequence below.
We now turn to demonstrations
of fitting several self-equilibrating traction-free stress fields on the annular region. A numerical example based on a metal forming simulation
is presented in appendix \ref{rolling}.

\section{Examples of fitting residual stress fields} \label{An}
In this section, we consider a few candidate residual fields on an annular domain and fit them using the eigenfunctions computed above. For simplicity, we consider residual stresses $\vec{\sigma}$ involving a single circumferential wave number $m$, with components given by
\begin{equation}
\label{form}
\vec{\sigma}=\sigma_{rr} \cos{m \theta} \, \, \vec{e_{r}} \otimes \vec{e_r} + \sigma_{r\theta} \sin{m \theta} \, \, \left(\vec{e_{r}} \otimes \vec{e_{\theta}} +  \vec{e_{\theta}} \otimes \vec{e_r} \right) + \sigma_{\theta \theta} \cos{m \theta} \, \, \vec{e_{\theta}} \otimes \vec{e_{\theta}},
\end{equation}
where the $r$-dependence of the stress components has been suppressed (note the similarity with Eq.\ \ref{eqsub}). We begin with
\begin{equation}
\label{resi}
\vec{\sigma} = \sum_{i=1}^{\infty} a_i \vec{\phi}_i,
\end{equation} 
where the eigenfunctions $\vec{\phi}_i$ were obtained above using the semi-analytical approach. Using the orthogonality of $\vec{\phi}_i$, we have
\begin{equation}
\nonumber
a_i=\frac{\int_{\Omega} \vec{\sigma} \cdot \vec{\phi}_i \, dA}{\int_{\Omega} \vec{\phi}_i \cdot \vec{\phi}_i \, dA},
\end{equation}
where the denominator would be unity if we had normalized our eigenfunctions.
Truncating the series in Eq.\ \ref{resi}, we write
\begin{equation}
\label{trunc}
\vec{\sigma}_N = \sum_{i=1}^{N} a_i \vec{\phi}_i,
\end{equation}
and use the squared relative error measure
\begin{equation}
\nonumber
E_N=\frac{\int_{\Omega} \left(\vec{\sigma}-\vec{\sigma}_N\right) \cdot \left(\vec{\sigma}-\vec{\sigma}_N\right) \, dA}{\int_{\Omega} \vec{\sigma} \cdot \vec{\sigma} \, dA}
\end{equation}
to study convergence in the norm of Eq. \ref{norm}. 

We now present four examples of candidate residual stress fields, and the corresponding fits. In the first two examples we construct hypothetical residual stress fields directly, with wavenumber $m=3$, from the equilibrium equations. In the third example we use the stress field in two concentric elastic cylinders in a shrink fit, with $m=0$. In the fourth example we consider the thermoelastic stress state in an initially-unstressed elastic annular body subjected to a subsequent
nonuniform rise in temperature, with $m=3$.

\subsection{Example 1: hypothetical stress field, \boldmath{$m=3$}}
\label{hypsf}
Let $\vec{\sigma}$ be as given in Eq.\ \ref{form}, with $m=3$. From equilibrium,
\begin{equation}
\label{eq}
\begin{split}
\sigma_{rr}'+\frac{m\sigma_{r\theta}}{r}+\frac{\sigma_{rr}-\sigma_{\theta \theta}}{r}=0, \\
\sigma_{r\theta}'-\frac{m\sigma_{\theta\theta}}{r}+\frac{2\sigma_{r\theta}}{r}=0,
\end{split}
\end{equation}
with four boundary conditions:
$$\sigma_{rr} = \sigma_{r\theta} = 0, \mbox{ at } r = r_i \mbox{ and } r = r_o.$$

\begin{figure}[t!]
	\centering
	\includegraphics [scale=0.45]{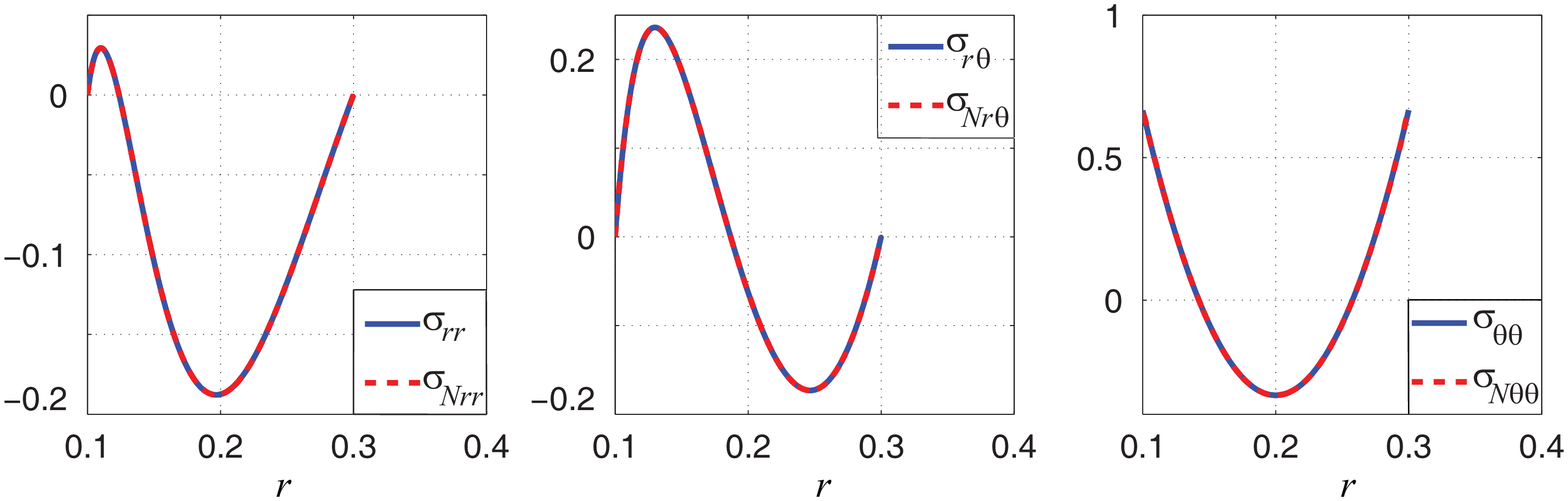}
	\caption{True and fitted stress fields $\vec{\sigma}$ and $\vec{\sigma}_N$ of Example 1, with $N=50$. }
	\label{sigmaN_hyp1}
	\vspace{5mm}
	\includegraphics [scale=0.3]{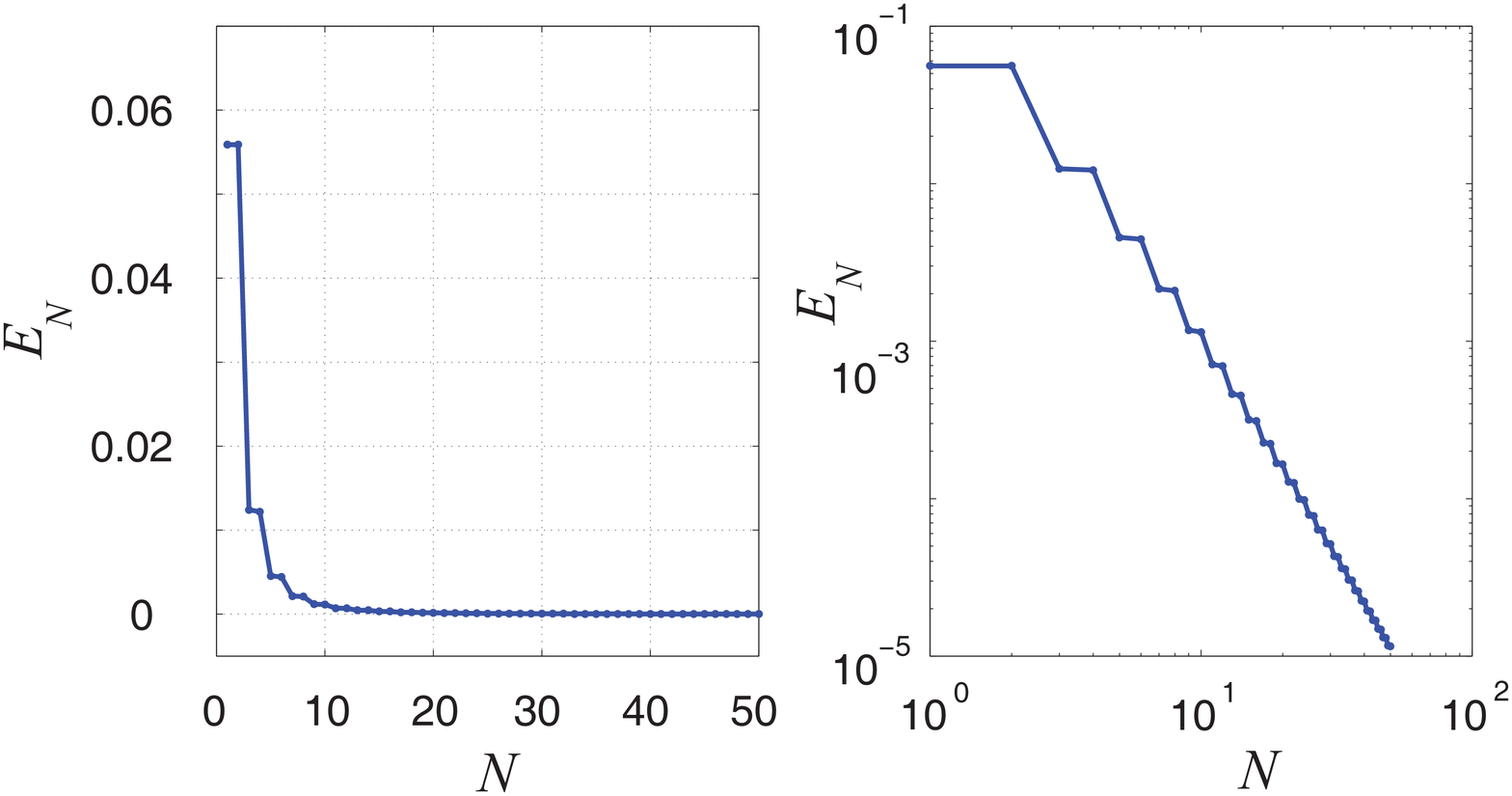}
	\caption{$E_N$ versus $N$, Example 1. Left: linear scale; right: log-log scale. Compare with figure \ref{En_fvm}.}
	\label{En_hyp1}
\end{figure}

To construct hypothetical residual stress fields, we can assume an arbitrary functional form 
$$ \sigma_{\theta\theta}=A(r)$$ 
with {\em two} free parameters in it. We can then solve for $\sigma_{r\theta}$ from the second of Eqs.\ \ref{eq}, retaining an integration constant. We finally solve for $\sigma_{rr}$ from the first of Eqs.\ \ref{eq}, retaining one more integration constant. The four boundary conditions can be satisfied using the two integration constants along with the two free parameters in $A(r)$.
We show two specific examples of stress fields computed using this approach.

\begin{figure}[t!]
	\centering
	\includegraphics [scale=0.45]{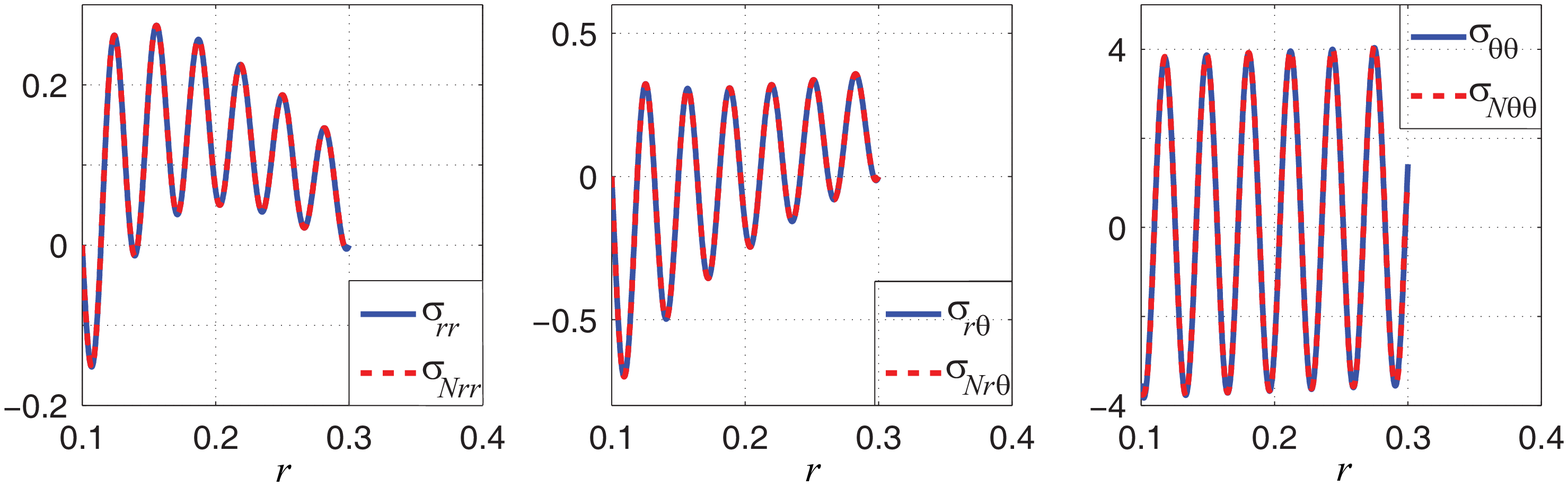}
	\caption{True and fitted stress fields $\vec{\sigma}$ and $\vec{\sigma}_N$ of Example 2, with $N=50$.}
	\label{sigmaN_hyp2}
	\includegraphics [scale=0.3]{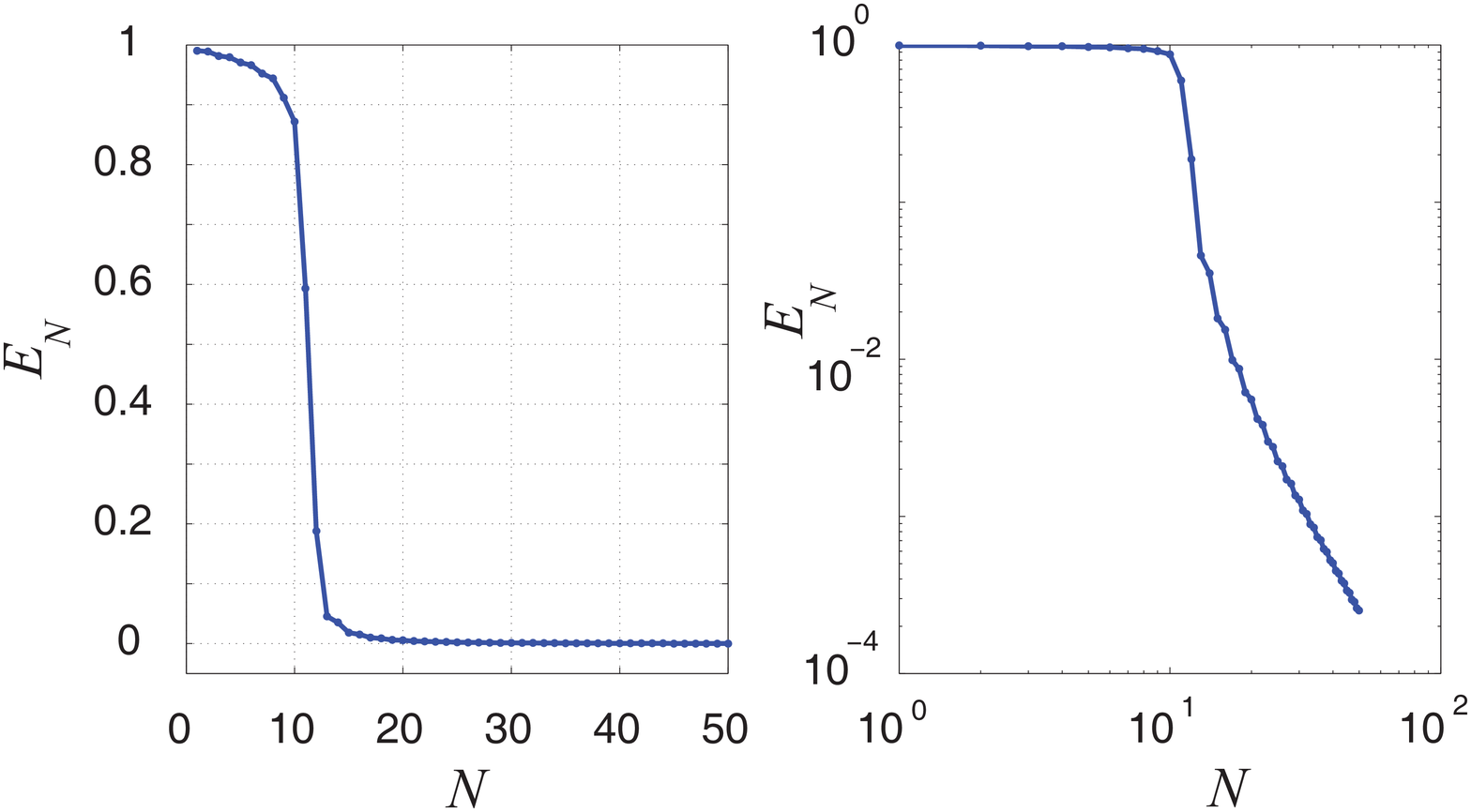}
	\caption{$E_N$ versus $N$, Example 2. Left: linear scale; right: log-log scale.}
	\label{En_hyp2}
\end{figure}

For the first example, we choose
$$ A(r)=C_0 + C_1 r + 100 r^2, $$
where $C_0$ and $C_1$ are free parameters, and the coefficient of 100 is arbitrary. Following the procedure above, we obtain $C_0=3.667$ and $C_1=-40.$ The resulting expressions for $\vec{\sigma}$ are given in appendix \ref{expressions}.
Figure \ref{sigmaN_hyp1} shows the components of $\vec{\sigma}$, along with components of the fitted $\vec{\sigma}_N$ ($N=50$). The error measure $E_N$ versus $N$ is plotted in figure \ref{En_hyp1}. Convergence is rapid, like $N^{-3}$ for large $N$, with $E_5 < 0.005$.

We mention that the $m=3$ normal vibration modes for the same domain (isotropic linear elasticity, plane strain) were computed separately and the stresses induced by those modes were also used in an attempted approximation of this same hypothetical stress field. The unsuccessful results of that attempt were plotted in figure \ref{En_fvm} (recall Eqs.\ \ref{demo}, further details omitted).

\subsection{Example 2: hypothetical stress field, \boldmath{$m=3$}}
For another example following section \ref{hypsf} above, we choose
$$A(r)=C_0 \sin(200r) + \frac{C_1}{r} + r.$$
The coefficient of 200 within the sine is chosen to produce several oscillations between $r_i = 0.1$ and $r_o = 0.3$. Calculations yield $C_0=-3.805$ and $C_1=-1.284 \times 10^{-2}$. The resulting expressions for $\vec{\sigma}$ are given in appendix \ref{expressions}. 
The fit (for $N$=50) is shown in figure \ref{sigmaN_hyp2}, and $E_N$ is plotted in figure \ref{En_hyp2}.

In figure \ref{En_hyp2} (left), we see that $E_{13}$ drops low. This is because, by choice, $\sigma_{\theta \theta}$ has 13 zero crossings. By figure \ref{m3}, we expect the $n^{\rm th}$ eigenfunction to have $n+1$ zero crossings in $\sigma_{\theta \theta}$. Therefore the 12$^{\rm th}$ eigenfunction has 13 zero crossings, and
$E_{13}$ is small. Subsequent convergence is rapid, like $N^{-3}$ for large $N$, with $E_{17} < 0.01$.

\subsection{Example 3: shrink fitted cylinder, \boldmath{$m=0$}} \label{shrink}
We consider an inner cylinder with inner radius $r_i$ and notional outer radius $r_c$, an outer cylinder with notional inner radius $r_c$ and outer radius $r_o=0.3$, with a small radial interference equal to $\delta$. The Young's modulus and Poisson's ratio of both cylinders are denoted by $E$ and $\nu$ respectively. The expressions for the resulting axisymmetric stress fields are given in appendix \ref{expressions}. We use eigenfunctions with $m=0$ in Eq.\ \ref{trunc}. 

Figure \ref{sigmaN_shrink} shows the nonzero components of $\vec{\sigma}$ and $\vec{\sigma}_N$ ($N$=100). Because $\sigma_{\theta \theta}$ is discontinuous at the contact surface between cylinders, convergence is slower (there are Gibbs oscillations \cite{gibbs1899fourier}). 
The plot of $E_N$ against $N$ in figure \ref{En_shrink} shows convergence like $N^{-1}$ for large $N$, with $E_{43} < 0.01$. Recalling the set $\mathcal{{S}}$ (Eq.\ \ref{eqdefS}) and its closure $\mathcal{\bar{S}}$, we note that $\vec{\sigma}$ belongs to $\mathcal{\bar{S}}$ but not $\mathcal{{S}}$. Convergence is still obtained because the $\vec{\phi}_i$ form a basis for $\mathcal{\bar{S}}$.
\begin{figure}[t!]
	\centering	
	\includegraphics [scale=0.45]{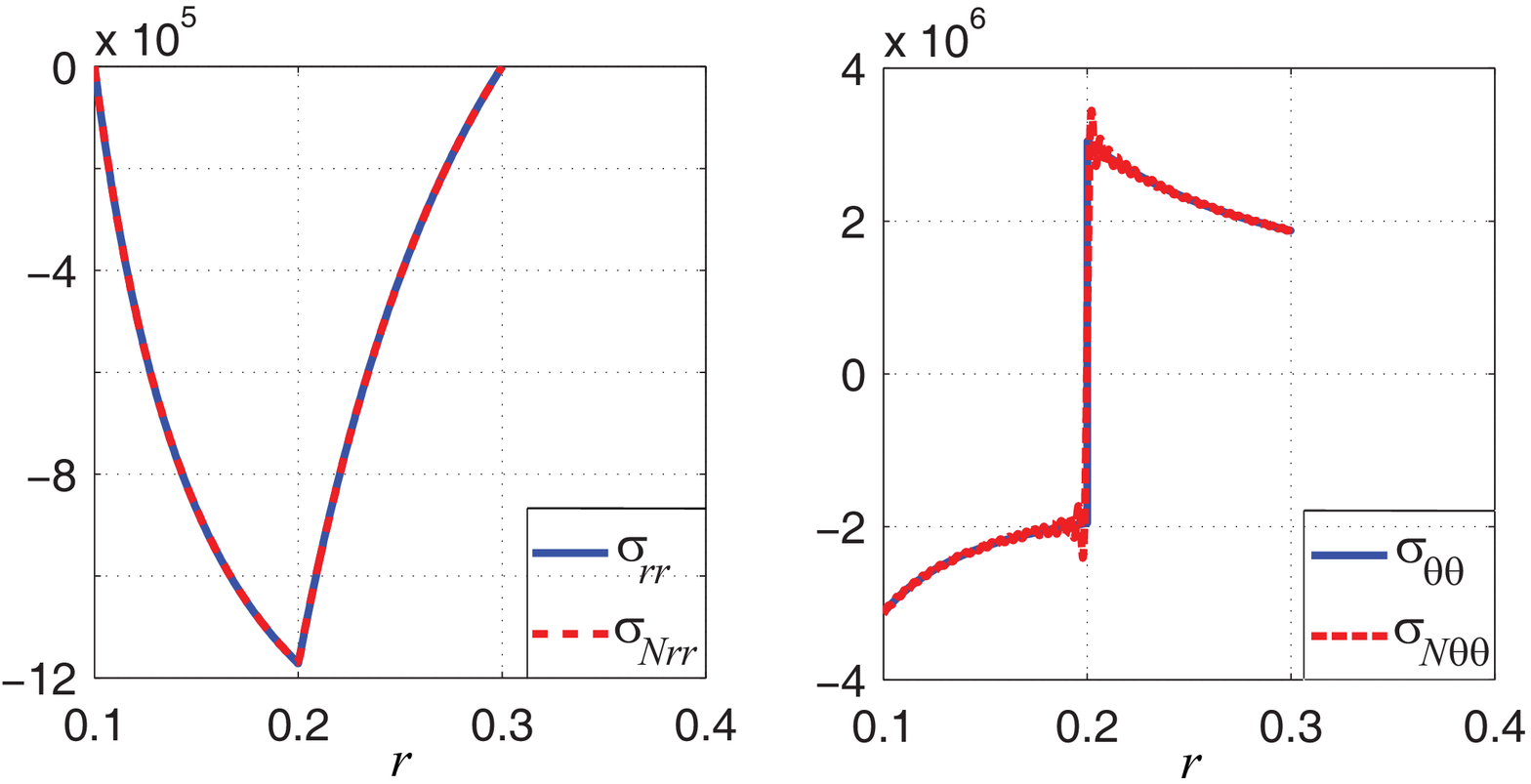}	
	\caption{True and fitted stress fields $\vec{\sigma}$ and $\vec{\sigma}_N$ of Example 3, with $N=100$.} 
	\label{sigmaN_shrink}
	\vspace{5mm}
	\centering
	\includegraphics [scale=0.3]{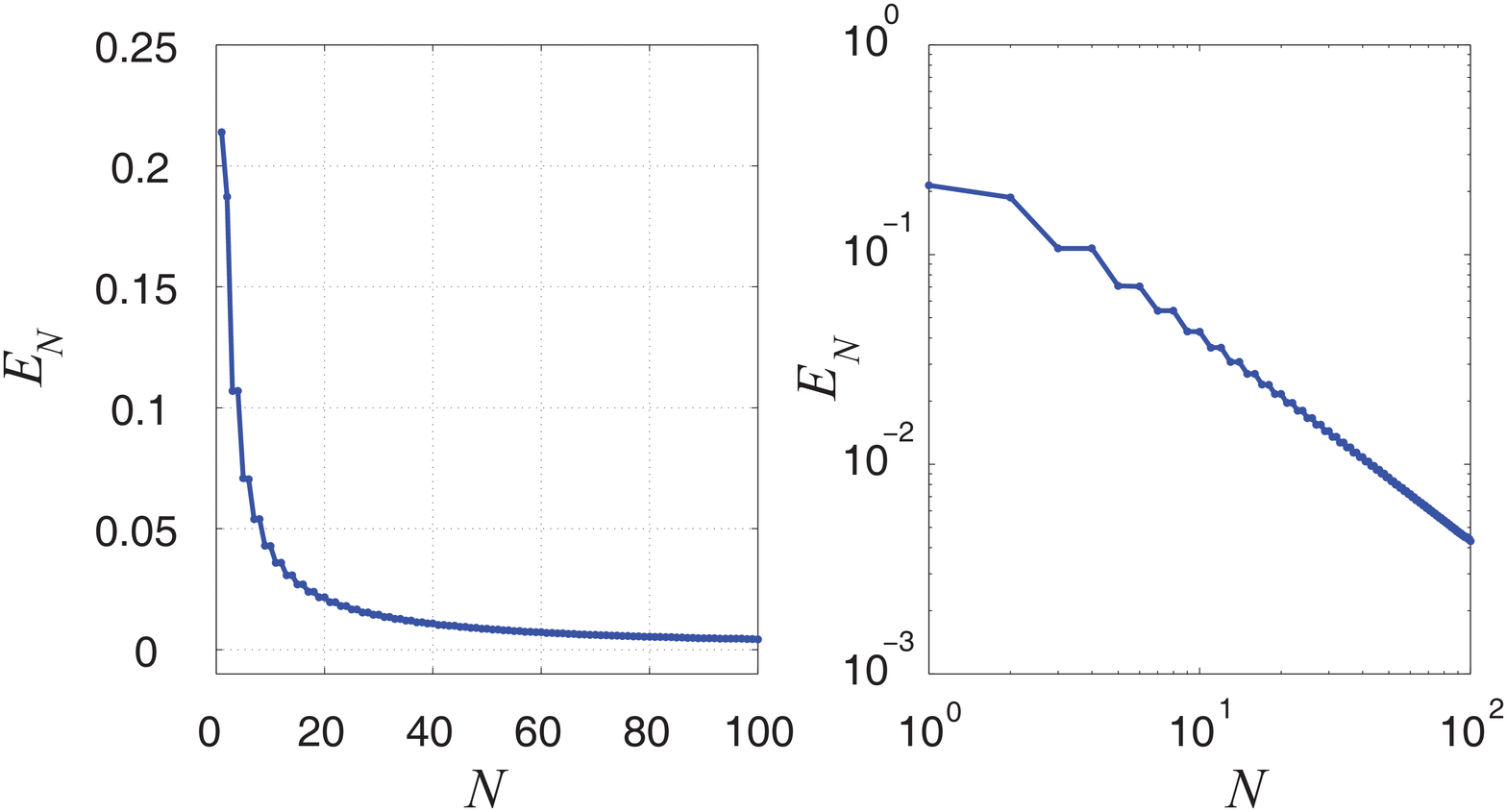}
	\caption{$E_N$ versus $N$, Example 3. Left: linear scale; right: log-log scale.}
	\label{En_shrink}
\end{figure}
\setlength{\textfloatsep}{5pt}

\subsection{Example 4: thermoelastic residual stress, \boldmath{$m=3$}}
If the initially unstressed annular unstressed region, with thermal coefficient $\alpha$, is subjected to a temperature change $T(r,\theta) = r \cos (3 \theta)$, the resulting thermal strain 
$$\vec{\varepsilon_{T}} = \alpha T \vec{I}$$
violates local compatibility, i.e., $\text{curl} \, \text{curl} \, \vec{\varepsilon_T} \neq \vec{0}$ (see e.g., \cite{barber}). The `global compatibility' equation derived from C\'esaro's integral \cite{boley}, for $m=3$, is trivially satisfied.
\begin{figure}[t!]
	\centering
	\includegraphics [scale=0.45]{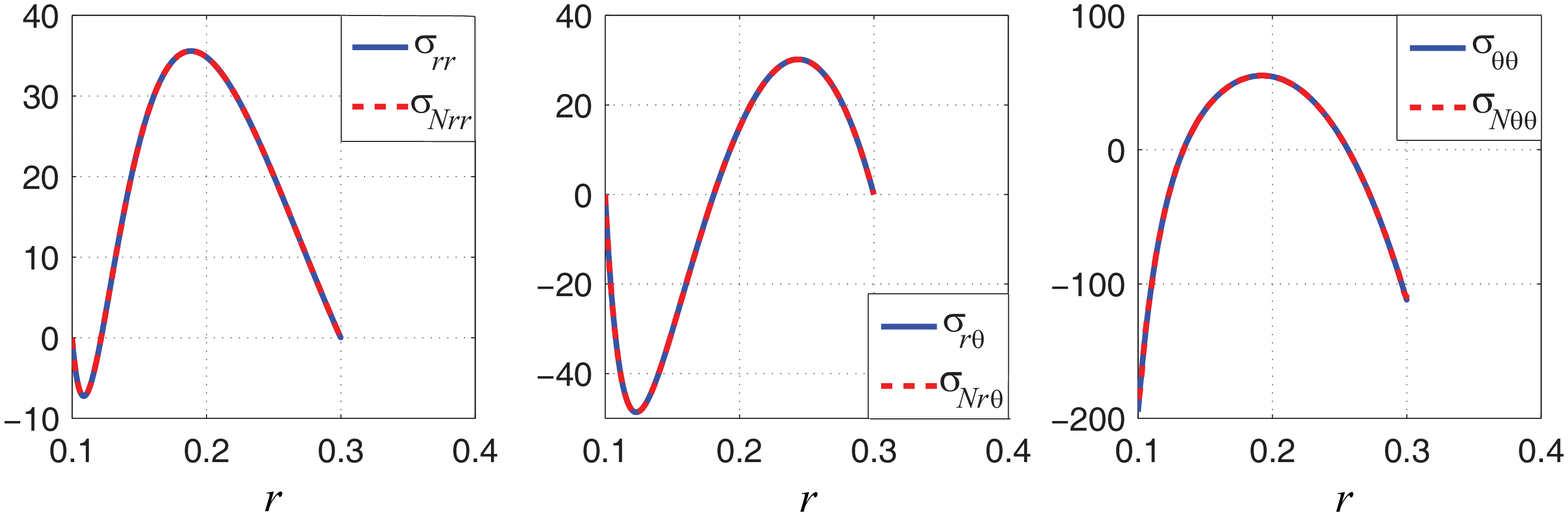}
	\caption{True and fitted thermoelastic stress fields $\vec{\sigma}$ (Eqs.\ \ref{thermode}, \ref{ODE2} and \ref{bc}) and $\vec{\sigma}_N$ of Example 4, with $N=50$.}
	\label{sigmaN_therm}
	\vspace{5mm}
	\includegraphics [scale=0.3]{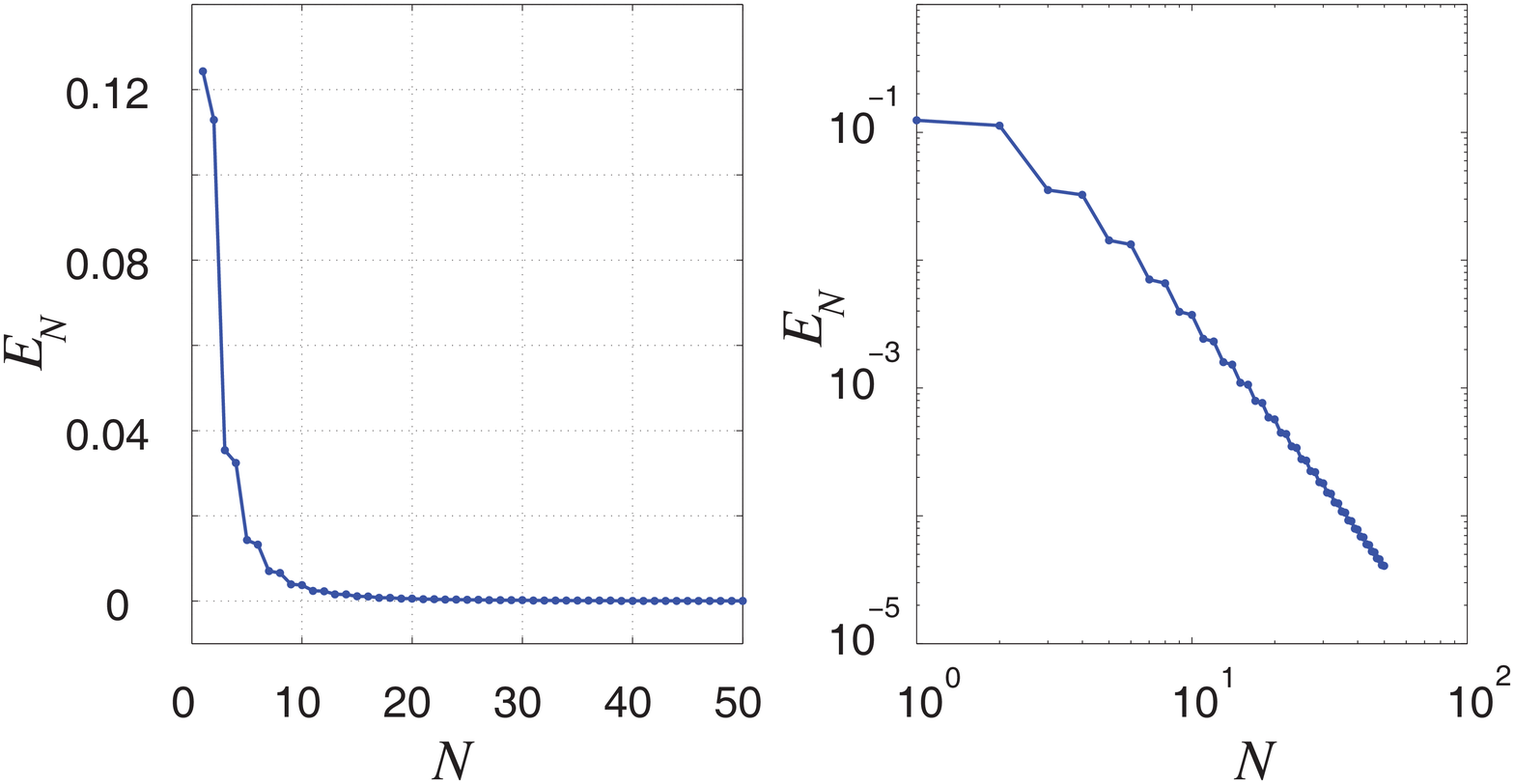}
	\caption{$E_N$ versus $N$, Example 4. Left: linear scale; right: log-log scale.}
	\label{En_therm}
\end{figure} 
The resulting stress $\vec{\sigma}$ satisfies (see e.g., \cite{boley})
\begin{equation}
\label{therm}
\Delta ( \text{Tr} \, \vec{\sigma}) = \frac{- \alpha E}{1-\nu} \Delta T,
\end{equation}
where $E$ is Young's modulus, $\nu$ is Poisson's ratio, and `Tr' denotes `trace.'
Substituting the expressions
\begin{equation}
\nonumber
\vec{\sigma}=\sigma_{rr} \cos{m \theta} \, \, \vec{e_{r}} \otimes \vec{e_r} + \sigma_{r\theta} \sin{m \theta} \, \, \left(\vec{e_{r}} \otimes \vec{e_{\theta}} +  \vec{e_{\theta}} \otimes \vec{e_r} \right) + \sigma_{\theta \theta} \cos{m \theta} \, \, \vec{e_{\theta}} \otimes \vec{e_{\theta}}
\end{equation}
in Eq.\ \ref{therm} gives
\begin{equation}
\label{thermode}
(\sigma_{rr} + \sigma_{\theta \theta})''+ \frac{(\sigma_{rr}+\sigma_{\theta \theta})'}{r} -\frac{m^2(\sigma_{rr}+\sigma_{\theta \theta})}{r^2}=- \frac{(m^2-1) \beta }{r},
\end{equation} 
where $\displaystyle \beta = \frac{-\alpha E}{(1-\nu)},$ and primes denote $r$-derivatives.
Eliminating $\sigma_{r \theta}$ from the equilibrium equations, we obtain another ODE:
\begin{equation}
\label{ODE2}
\sigma_{rr}''+ \frac{4\sigma_{rr}'}{r}-\frac{\sigma_{\theta \theta}'}{r}+\frac{2\sigma_{rr}}{r^2}+\frac{(m^2-2)\sigma_{\theta\theta}}{r^2}=0. \end{equation}
Traction free boundary conditions on the inner and outer radius, in terms of $\sigma_{rr}$ and $\sigma_{\theta \theta}$, are
\begin{equation}
\label{bc}
\begin{split}
\sigma_{rr}=0 \,\,\, \mbox{at} \,\,\, r=r_i \,\,\,\mbox{and} \,\, \, r_o, \\
\sigma_{rr}' +\frac{\sigma_{rr}-\sigma_{\theta\theta}}{r}=0 \,\,\, \mbox{at} \,\,\, r=r_i \,\,\,\mbox{and} \,\, \, r_o.
\end{split}
\end{equation}
The boundary value problem described by Eqs.\ \ref{thermode}, \ref{ODE2} and \ref{bc} can be solved numerically (iteratively; details omitted).

Figure \ref{sigmaN_therm} shows the components of $\vec{\sigma}$ and the fitted $\vec{\sigma}_N$ ($N$=50). Figure \ref{En_therm} shows $E_N$ versus $N$. Convergence is rapid as expected, with $E_7 < 0.01$.  

This concludes our demonstration of fitting reasonable but arbitrary, known, self-equilibrating, and traction free stress states (``residual stresses'') on an annular domain using the basis functions developed in this paper. For a different example of fitting a residual stress obtained from a metal forming simulation in Abaqus, please see appendix \ref{rolling}.

\section{Extension of the theory to three dimensions}
Our derivation of the eigenvalue problem in section \ref{PSsec} was for a two-dimensional domain. The extension of the theory
to three dimensions is straightforward, and is now presented for completeness. Computations, which will require finite element formulations in 3D, are left for future work.

Most of the development of section \ref{PSsec} is directly applicable to three dimensions if we interpret the ``$dA$'' in the domain integrals to be volume elements.
While obtaining Eq.\ \ref{three_eqns}, the two-dimensionality of the domain $\Omega$ was used only to derive the point-wise natural boundary condition of Eq.\ \ref{q2q3} from the integral condition of Eq.\ \ref{natural}. As a result, in three dimensions, only the fourth of Eqs.\ \ref{three_eqns} changes. 

Equation \ref{natural} in three dimensions is
\begin{equation}
\label{n3d}
\int_{\partial \Omega} (\nabla \vec{\sigma} \circ \vec{\zeta}) \cdot \vec{n} \, dS = 0,
\end{equation}
where ``$dS$'' is now interpreted as an infinitesimal area element on the surface $\partial \Omega$ of the three-dimensional domain $\Omega$.

Consider an arbitrarily small portion $\Delta S$ including any point $P$ on $\partial \Omega$. Restricting attention to
$\vec{\zeta}$ that is nonzero only on $\Delta S$, Eq.\ \ref{n3d} becomes
\begin{equation}
\label{n3d1}
\int_{\Delta S} (\nabla \vec{\sigma} \circ \vec{\zeta}) \cdot \vec{n} \, dS = 0.
\end{equation}
Equation\ \ref{n3d1} can be rewritten, using indicial notation as
\begin{equation}
\label{n3d2}
\int_{\Delta S} (\nabla \vec{\sigma} \circ \vec{\zeta}) \cdot \vec{n} \, dS = \int_{\Delta S} \sigma_{ij,k} \zeta_{ij} n_k \, dS = \int_{\Delta S} \sigma_{ij,k} n_k \zeta_{ij}  \, dS = \int_{\Delta S} \nabla_n  \vec{\sigma} \cdot \vec{\zeta} \, dS = 0.
\end{equation}
Since $\Delta S$ is arbitrarily small, and $\nabla_n \vec{\sigma}$, $\vec{\zeta}$ are continuous, we can use localization to conclude that
$$ \nabla_n \vec{\sigma} \cdot \vec{\zeta}=0 \, \, \mbox{on} \,\, \partial \Omega.$$
We choose a pair of convenient orthonormal vectors $\vec{t_1}$ and $\vec{t_2}$ in the tangent plane passing through $P$. This can be done, e.g., using the Cartesian
unit vector $\vec{e_1}$ as
$$ \vec{t_1}=\frac{\vec{e_1} \times \vec{n}}{\norm{\vec{e_1} \times \vec{n}}} \, \, \, \mbox{and} \, \, \, \vec{t_2}=\vec{n} \times \vec{t_1},$$
where `$\times$' represents the vector cross product; ($\vec{t_1}$,$\vec{t_2}$,$\vec{n}$) form a right handed orthonormal triad. If $\vec{n}$ is parallel, or almost parallel to $\vec{e_1}$, then $\vec{e_1}$ can be replaced by $\vec{e_2}$ in the subsequent discussion. 

Since $\vec{\zeta}$ is symmetric and satisfies $\vec{\zeta} \vec{n}=\vec{0}$, it must be expressible as
$$ \vec{\zeta} = \kappa_1 \, \vec{t_1} \otimes \vec{t_1} + \kappa_2 \, \vec{t_2} \otimes \vec{t_2} + \kappa_3 \left(\vec{t_1} \otimes \vec{t_2} + \vec{t_2} \otimes \vec{t_1}\right)$$
for arbitrary $\kappa_1, \kappa_2, \kappa_3$. 
First choosing $\kappa_2=\kappa_3=0$ and $\kappa_1\neq 0$, we obtain the natural boundary condition (compare with Eq.\ \ref{q3q4})
$$ \nabla_n \vec{\sigma} \cdot \left(\vec{t_1} \otimes \vec{t_1}\right)=0.$$
Similarly, we obtain two more natural boundary conditions:
$$ \nabla_n \vec{\sigma} \cdot \left(\vec{t_2} \otimes \vec{t_2}\right)=0 \,\,\, \mbox{ and } \,\,\, \nabla_n \vec{\sigma} \cdot \left(\vec{t_1} \otimes \vec{t_2} + \vec{t_2} \otimes \vec{t_1} \right)=0.$$
The last condition can be simplified, because $\nabla_n \vec{\sigma}$ is symmetric, to
$$ \nabla_n \vec{\sigma} \cdot \left(\vec{t_1} \otimes \vec{t_2}\right)=0. $$
Since $\vec{\sigma}$ has six components, the essential boundary conditions $\vec{\sigma} \vec{n} = \vec{0}$ along with these natural boundary conditions
present a total of six boundary conditions as needed.

To summarize, the eigenvalue problem developed earlier for two dimensions is extended in principle to three dimensions as follows:
\begin{eqnarray}
\nonumber
-\Delta \vec{\sigma} + \nabla_s \vec{\mu}  = \lambda \vec{\sigma} \,\, \text{ and } \,\, \mbox{div} \, \vec{\sigma} = \vec{0} & \text{ in }  \Omega,\\
\vec{\sigma n} = \vec{0} & \text{ on }  \partial \Omega, \nonumber\\
\nabla_n \vec{\sigma} \cdot (\vec{t_1} \otimes \vec{t_1}) = 0,  \, \, \, \nabla_n \vec{\sigma} \cdot (\vec{t_2} \otimes \vec{t_2}) = 0  \, \, \, \text{ and }  \, \, \, \nabla_n \vec{\sigma} \cdot (\vec{t_1} \otimes \vec{t_2}) = 0 & \text{ on }  \partial \Omega, \nonumber
\end{eqnarray}
for any two orthonormal unit vectors $\vec{t_1}$ and $\vec{t_2}$ tangential to the surface at the point of interest.

In the above, $\vec{\sigma}$ is a symmetric three dimensional second order tensor field and $\vec{\mu}$ is a three dimensional vector.

The proof of orthogonality of eigenfunctions, and the fact that they form a basis, proceeds along lines identical to the two dimensional case, and is omitted.

\section{Conclusions}
In this paper we set out to develop a sequence of stress fields which can serve as a basis for describing an arbitrary existing residual stress state in a given body.
Prior theoretical work on residual stresses has largely focused on specific mechanisms that generate such stresses, or occasionally discussed general aspects of such stresses.
In contrast, here we have proposed a specific, geometry-dependent, coordinate-system independent, extremization problem that leads to an eigenvalue problem whose spectrum provides such a basis. We have initially restricted the discussion to two dimensions, but shown later that the method can be extended to three dimensions. We have proved that the sequence of eigenfunctions indeed provides an orthonormal basis for the stress states under consideration. We have demonstrated some finite element solutions for such basis functions on three different domains, and then computed many basis functions for an annular domain using a semi-analytical approach. Finally, we have demonstrated that five different, rather arbitrary, residual stress states can indeed be approximated to arbitrary accuracy (in the $L^2$ norm, Eq.\ \ref{norm}) using our basis functions.

We note here that there are some philosophical similarities between our approach to constructing a basis for residual stress, and the study of the Stokes
operator \cite{temam} from incompressible fluid mechanics. However, our residual stresses are symmetric tensor fields, and our equilibrium equations are vector valued; while for the Stokes operator the velocities are vector fields, and the incompressibility implies a scalar constraint.

The importance of our work is twofold.

Academically speaking, we present a departure from the usual theoretical approach wherein examination of residual stress states is closely tied to their mechanical origins. Our approach recognizes that the basis must be generated afresh for every body geometry, but is otherwise free of the mechanical origins of the residual stresses. This is, in principle, like the construction of Fourier series as a basis for periodic functions with period $T >0$, independent of the physical origins of the periodicity; or the use of normal vibration modes as a basis to represent static deflections of a body under general loading. The basis we compute is a property of the body's shape and size, independent of its constitutive behavior.

In practical terms, we believe that our work opens the door to valuable new computations in industrial settings. For example, at the end of a metal forming calculation using nonlinear elastoplastic simulation, residual stress states in the unloaded body are often just displayed graphically. Now, the coefficients from an expansion using our basis can provide a useful new way of numerically describing those stress states. As another example, if the residual stress state in a body is experimentally determined at $N$ isolated points, there was so far no theoretically well-defined and mechanically consistent way to interpolate those stresses and make an assessment of possible residual stress states elsewhere in the body. The use of a basis, such as we have developed here, suggests a new research direction.

\section*{Funding sources}
This research did not receive any specific grant from funding agencies in the public, commercial, or not-for-profit sectors.

\section*{Acknowledgements}
We thank Anurag Gupta, Animesh Pandey, Ayan Roychowdhury, Sovan Das, and Jim Jenkins for technical discussions and encouragement. 

\section*{Conflict of interest}
The authors declare that they have no conflict of interest.

\begin{appendices}

\section{Proof of Eq. \ref{ip2}} \label{math_reductions}
First, we show that 
\begin{equation}
\label{ip1a}
\int_{\Omega}  \nabla_s \vec{\mu}  \cdot \vec{\sigma} \, dA = 0
\end{equation}
in Eq.\ \ref{ip}.
Note that
$$\nabla_s \vec{\mu}  \cdot \vec{\sigma} = \frac{\mu_{i,j}+\mu_{j,i}}{2} \, \sigma_{ij} = \mu_{i,j} \sigma_{ij} = \left ( \mu_i \sigma_{ij} \right )_{,j} - \mu_i \sigma_{ij,j} = \left ( \mu_i \sigma_{ij} \right )_{,j} $$
where we have used $\sigma_{ij}=\sigma_{ji}$ and $\sigma_{ij,j}=0$. Using the divergence theorem,
$$\int_{\Omega}  \left ( \mu_i \sigma_{ij} \right )_{,j} dA = 
\int_{\partial \Omega}  \mu_i \sigma_{ij} n_j ds = 0$$
because $\sigma_{ij} n_j = 0$ on $\partial \Omega$. Thus Eq.\ \ref{ip} becomes
\begin{equation}
\label{ip1} \int_{\Omega} \left ( - \Delta \vec{\phi}  - \lambda \vec{\phi} \right )  \cdot \vec{\sigma} dA = 0.
\end{equation}
Next, observe that
$$ \Delta \vec{\phi} \cdot \vec{\sigma} = \phi_{ij,kk} \sigma_{ij} = \left ( \phi_{ij,k} \sigma_{ij} \right )_{,k} - \phi_{ij,k} \sigma_{ij,k}.$$
In the right hand side above,
$$ \phi_{ij,k} \sigma_{ij,k} = \nabla \vec{\phi} \cdot \nabla \vec{\sigma},$$
while
$$ \int_{\Omega} \left ( \phi_{ij,k} \sigma_{ij} \right )_{,k} \, dA = \int_{\partial \Omega} \phi_{ij,k} \sigma_{ij} n_k \, ds.$$ 
Recalling Eq.\ \ref{q3q4} and the related discussion, symmetry of $\vec{\sigma}$ means
$\sigma_{ij} = \kappa(s) t_i t_j$ for some scalar $\kappa(s)$, and so
$$ \int_{\partial \Omega} \phi_{ij,k} \sigma_{ij} n_k \, ds = 0,$$
proving Eq. \ref{ip2}.

\section{Unit ball in $\mathcal{S}_{N\perp}$ contains an extremizer of $J_0$} \label{snperp}
The unit ball in $\mathcal{S}_{N\perp}$ is understood to be the set 
$$\mathcal{P}=\biggl\{\vec{\sigma} \left | \vec{\sigma} \in \mathcal{S}_{N\perp}, \hspace{1mm} \left(\int_{\Omega} \vec{\sigma} \cdot \vec{\sigma} \, dA\right)^{\frac{1}{2}}=1  \right. \biggr\}.$$
In this section, we show that $\mathcal{P}$ contains an extremizer of \\
 $\displaystyle J_0(\vec{\sigma})=\frac{1}{2}\int_{\Omega} \nabla \vec{\sigma} \cdot \nabla \vec{\sigma} \, dA$.
 
When we wish to include the elements in $\mathcal{S}_{N\perp}$ which have norm less than 1 as well, we will use the symbol $\bar{\mathcal{P}}$, as in 
$$\bar{\mathcal{P}}=\biggl\{\vec{\sigma} \left | \vec{\sigma} \in \mathcal{S}_{N\perp}, \hspace{1mm} \left(\int_{\Omega} \vec{\sigma} \cdot \vec{\sigma} \, dA\right)^{\frac{1}{2}} \leq 1 \right. \biggr\}. $$

Recall that $\mathcal{S}_{N\perp}$ is the orthogonal complement of $\mathcal{S}_N$ in $\mathcal{S}$. If $N$ is finite, $\mathcal{S}_{N\perp}$ is infinite dimensional. However, our arguments below only require the dimensionality of $\mathcal{S}_{N\perp}$ to be $\geq$ 2.

We note that the problems of minimizing $J_0(\vec{\sigma})$ and minimizing \\ $\hat{J}(\vec{\sigma})=(2J_0(\vec{\sigma}))^{\frac{1}{2}}=\displaystyle \left(\int_{\Omega} \nabla \vec{\sigma} \cdot \nabla \vec{\sigma} \, dA \right)^{\frac{1}{2}}$ are equivalent. The values of $\hat{J}$ evaluated in $\mathcal{P}$ have a greatest lower bound $\psi_0$. Thus, there exists a sequence $(\vec{\sigma}_n)$ in $\mathcal{P}$ such that
\begin{equation}
\nonumber
\displaystyle{\lim_{n \to \infty}} \hat{J}(\vec{\sigma}_n) = \psi_0.
\end{equation}
We must show that the limit of $(\vec{\sigma}_n)$ is in $\mathcal{P}$, and $\hat{J}$ evaluated at that limit is $\psi_0$.

We will use the $L^2$ and $H^1$ norms of a function $\vec{\sigma} \in \mathcal{S}$, as in  
$$\norm{\vec{\sigma}}_{L^2}=\left(\int_{\Omega} \vec{\sigma} \cdot \vec{\sigma} \, dA \right)^{\frac{1}{2}}, $$
$$\norm{\vec{\sigma}}_{H^1}=\left(\int_{\Omega} \vec{\sigma} \cdot \vec{\sigma} \, dA  +\int_{\Omega} \nabla \vec{\sigma} \cdot \nabla \vec{\sigma} \, dA \right)^{\frac{1}{2}}.$$

Our proof will proceed using the following steps. First we will show that the sequence $(\vec{\sigma}_n)$ is bounded in the $H^1$ norm, and thus has a subsequence that converges weakly in the $H^1$ norm, and strongly in the $L^2$ norm, to some $\vec{\sigma}_0$. We will then show that $\vec{\sigma}_0$ belongs to $\mathcal{P}$. Finally, we will show that although $\hat{J}$ is not continuous, it is lower semi-continuous, a property which implies that $\hat{J}$ achieves $\psi_0$ at $\vec{\sigma}_0$. 

\begin{proposition}
	$(\vec{\sigma}_n)$ is bounded in the $H^1$ norm.
\end{proposition}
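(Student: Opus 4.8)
The plan is to observe that membership of each $\vec{\sigma}_n$ in $\mathcal{P}$ pins down its $L^2$ norm exactly, so that the $H^1$ norm is controlled entirely by the Dirichlet-type term $\int_\Omega \nabla\vec{\sigma}_n \cdot \nabla\vec{\sigma}_n \, dA$; and this term is bounded precisely because $(\vec{\sigma}_n)$ is a minimizing sequence for $\hat{J}$.

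First I would record that, by the definition of the $H^1$ norm given above and the fact that $\vec{\sigma}_n \in \mathcal{P}$,
$$ \norm{\vec{\sigma}_n}_{H^1}^2 = \int_\Omega \vec{\sigma}_n \cdot \vec{\sigma}_n \, dA + \int_\Omega \nabla\vec{\sigma}_n \cdot \nabla\vec{\sigma}_n \, dA = 1 + \hat{J}(\vec{\sigma}_n)^2, $$
where the first integral equals $1$ because $\norm{\vec{\sigma}_n}_{L^2}=1$, and the second is $\hat{J}(\vec{\sigma}_n)^2$ by the definition of $\hat{J}$. Next I would note that the greatest lower bound $\psi_0$ of $\hat{J}$ over $\mathcal{P}$ is finite: $\mathcal{S}_{N\perp}$ has dimension at least $2$ and is therefore nonempty, and every $\vec{\sigma} \in \mathcal{S}_{N\perp} \subseteq \mathcal{S}$ has $\int_\Omega \nabla\vec{\sigma}\cdot\nabla\vec{\sigma}\,dA < \infty$ by the definition of $\mathcal{S}$ in Eq.\ \ref{eqdefS}; hence $0 \le \psi_0 < \infty$. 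Since $\hat{J}(\vec{\sigma}_n) \to \psi_0$ by the choice of the minimizing sequence, the real sequence $\bigl(\hat{J}(\vec{\sigma}_n)\bigr)$ is convergent and therefore bounded, say $\hat{J}(\vec{\sigma}_n) \le M$ for all $n$. Combining this with the displayed identity gives $\norm{\vec{\sigma}_n}_{H^1}^2 \le 1 + M^2$ for all $n$, which is the assertion.

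There is no genuine obstacle here; the statement is a bookkeeping consequence of the normalization constraint together with the definition of a minimizing sequence. The only point that deserves a sentence of justification is the finiteness of $\psi_0$, which follows immediately from the fact that elements of $\mathcal{S}$ have square-integrable gradients by construction, so that $\hat{J}$ takes a finite value somewhere on the nonempty set $\mathcal{P}$.
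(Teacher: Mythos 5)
Your proof is correct, but it takes a more direct route than the paper's. The paper invokes Poincar\'e's inequality (using the fact that residual stresses have zero mean) to obtain $\norm{\vec{\sigma}}_{H^1} \le \sqrt{C^2+1}\,\hat{J}(\vec{\sigma})$ for all $\vec{\sigma}\in\mathcal{S}$, and then concludes boundedness because each $\vec{\sigma}_n\in\mathcal{P}$ has finite $\hat{J}$. You instead exploit the normalization constraint $\norm{\vec{\sigma}_n}_{L^2}=1$ to write the exact identity $\norm{\vec{\sigma}_n}_{H^1}^2 = 1 + \hat{J}(\vec{\sigma}_n)^2$, then observe that $\hat{J}(\vec{\sigma}_n)\to\psi_0<\infty$ and a convergent real sequence is bounded. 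Your version is leaner: Poincar\'e's inequality is not actually needed here once the $L^2$ norm is pinned to $1$ (the paper uses it again later, in its Proposition 4, where it is genuinely needed to show $\hat{J}$ is a norm). Your version is also a bit more careful on a point the paper glosses over: the paper reasons from ``each $\hat{J}(\vec{\sigma}_n)$ is finite'' to ``the sequence is bounded,'' which is not automatic; you close this gap explicitly by citing convergence to the finite infimum $\psi_0$. What the paper's approach buys is a single inequality $\norm{\cdot}_{H^1}\lesssim\hat{J}(\cdot)$ valid on all of $\mathcal{S}$, which is reusable elsewhere in the argument; what yours buys is a shorter, self-contained proof of this particular proposition.
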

\begin{proof}	
Since residual stresses have zero mean \cite{hoger1986}, by Poincar\'e's inequality \cite{Giovanni} there exists a positive real number $C$ that depends only on $\Omega$  such that
$$ \left(\int_{\Omega} \vec{\sigma} \cdot \vec{\sigma} \, dA \right)^{\frac{1}{2}} \leq  C \left(\int_{\Omega} \nabla \vec{\sigma} \cdot \nabla \vec{\sigma} \, dA \right)^{\frac{1}{2}} \,\, \forall \vec{\sigma}\in \mathcal{S}.$$ 
This implies that
$$ \norm{\vec{\sigma}}_{H^1} \leq \sqrt{C^2+1}\, \hat{J}(\vec{\sigma}).$$
By definition, all the elements in the sequence $(\vec{\sigma}_n) \in \mathcal{P}$ yield finite $\hat{J}$. We then conclude from the above equation that $(\vec{\sigma}_n)$ is bounded in the $H^1$ norm.
\end{proof}
    
\begin{proposition}
	$(\vec{\sigma}_n)$ has a subsequence that converges to some $\vec{\sigma}_0$ weakly in the $H^1$ norm, and strongly in the $L^2$ norm.
\end{proposition}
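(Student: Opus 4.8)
The plan is to reduce the statement to two standard facts about the Sobolev space $H^1(\Omega)$. First I would observe that $\mathcal{S} \subset H^1(\Omega)$: by the defining conditions of $\mathcal{S}$ in Eq.\ \ref{eqdefS}, each of the finitely many Cartesian components of $\vec{\sigma}$, together with its first partial derivatives, is square integrable on $\Omega$, so the $H^1$ norm used above is exactly the usual Sobolev norm applied componentwise. By the preceding proposition the sequence $(\vec{\sigma}_n)$ is bounded in this norm. Since $H^1(\Omega)$ is a Hilbert space, its closed balls are weakly sequentially compact; rescaling into a fixed ball and extracting, I obtain a subsequence (not relabelled) that converges weakly in $H^1$ to some $\vec{\sigma}_0 \in H^1(\Omega)$.

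The strong $L^2$ convergence would then come from the compact Rellich--Kondrachov embedding $H^1(\Omega) \hookrightarrow L^2(\Omega)$, valid because $\Omega$ is bounded with a sufficiently regular boundary (the paper's standing assumption, with isolated corners rounded as noted earlier). A bounded sequence in $H^1$ therefore has a subsequence converging strongly in $L^2$; passing to this further subsequence and noting that its $L^2$ limit must coincide with the weak $H^1$ limit $\vec{\sigma}_0$ — since weak $H^1$ convergence implies weak $L^2$ convergence, strong $L^2$ convergence implies weak $L^2$ convergence, and weak limits in $L^2$ are unique — I end up with a single subsequence enjoying both properties, as claimed.

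A few remarks on bookkeeping. The embedding theorem is usually stated for scalar functions; here it is applied separately to the finitely many components of the symmetric tensor field, which is routine. At this stage $\vec{\sigma}_0$ is only known to lie in $H^1(\Omega)$, equivalently in the $H^1$-closure of $\mathcal{S}$; the claim that it actually lies in $\mathcal{S}_{N\perp}$ and has unit $L^2$ norm — i.e.\ that $\vec{\sigma}_0 \in \mathcal{P}$ — is the content of the next proposition and is not asserted here. Also, one need not know that $\mathcal{S}$ or $\mathcal{S}_{N\perp}$ is closed in $H^1$ for the present statement; weak compactness of balls in the ambient Hilbert space is all that is used.

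I expect the main obstacle to be purely one of hypotheses rather than of argument: ensuring that Rellich--Kondrachov genuinely applies, namely that $\Omega$ is bounded (assumed throughout) and that its boundary has enough regularity (Lipschitz, or the weaker cone/extension conditions) for the compact embedding to hold. Everything else — weak sequential compactness of balls in a Hilbert space, uniqueness of weak limits, and the implication ``strong $L^2$ $\Rightarrow$ weak $L^2$'' — is elementary and can be invoked without further comment.
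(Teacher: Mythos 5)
Your proof is correct and follows essentially the same route as the paper: boundedness in $H^1$ from the previous proposition, weak sequential compactness of bounded sets in $H^1$, and the Rellich--Kondrachov compact embedding $H^1 \hookrightarrow L^2$. The only cosmetic difference is that the paper obtains strong $L^2$ convergence of the \emph{same} weakly convergent subsequence directly (weak convergence plus compact embedding already yields strong convergence, no further subsequence needed), whereas you extract a further subsequence and then identify limits; both are valid.
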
	
\begin{proof}
	It is well known in the theory of functional analysis that $H^1$ (set of $\vec{\sigma}$ with finite $H^1$ norm) is a Banach space. Every bounded sequence in a Banach space has a weakly convergent subsequence (see corollary A.60, page 506 of \cite{Giovanni}).  It follows that there is a subsequence $(\vec{\sigma}_{n_k})$ of $(\vec{\sigma}_{n})$ that converges weakly to some $\vec{\sigma}_0 \in H^1.$ By the Rellich-Kondrachov theorem \cite{Giovanni}, $H^1$ is compactly embedded in $L^2$ (set of $\vec{\sigma}$ with finite $L^2$ norm), and therefore $(\vec{\sigma}_{n_k})$ converges strongly to $\vec{\sigma}_0$ in the $L^2$ norm (e.g., see exercise 3.5, page 80 of \cite{brezis}).
\end{proof}	

\begin{proposition}
	$\vec{\sigma}_0 \in \mathcal{P}$.
\end{proposition}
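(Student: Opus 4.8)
The plan is to verify, property by property, that $\vec{\sigma}_0$ belongs to $\mathcal{S}_{N\perp}$ and has unit $L^2$ norm, exploiting the two convergence modes furnished by the previous proposition: along the subsequence $(\vec{\sigma}_{n_k})$ we have weak convergence in $H^1$ and strong convergence in $L^2$. The unit-norm requirement comes first and is the easiest: since $\norm{\vec{\sigma}_{n_k}}_{L^2}=1$ for every $k$ and $\vec{\sigma}_{n_k}\to\vec{\sigma}_0$ \emph{strongly} in $L^2$, continuity of the norm under strong convergence gives $\norm{\vec{\sigma}_0}_{L^2}=1$. This is precisely where strong (not merely weak) $L^2$ convergence is essential: weak convergence would only yield $\norm{\vec{\sigma}_0}_{L^2}\le 1$, leaving open the possibility $\vec{\sigma}_0=\vec{0}$.

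Next I would handle the constraints that are ``interior'' in nature. Symmetry of $\vec{\sigma}_0$ is inherited because the set of symmetric tensor fields is a closed linear subspace, stable under limits; square-integrability of $\vec{\sigma}_0$ and of $\nabla\vec{\sigma}_0$ is automatic from $\vec{\sigma}_0\in H^1$. For equilibrium I would use that $\mbox{div}$ is a bounded linear operator from $H^1$ to $L^2$, hence weak-to-weak continuous; since $\mbox{div}\,\vec{\sigma}_{n_k}=\vec{0}$ for every $k$, the weak $H^1$ limit satisfies $\mbox{div}\,\vec{\sigma}_0=\vec{0}$. For the $N$ orthogonality conditions that cut $\mathcal{S}$ down to $\mathcal{S}_{N\perp}$, each map $\vec{\sigma}\mapsto\int_\Omega\vec{\phi}_p\cdot\vec{\sigma}\,dA$ is a bounded linear functional on $L^2$, so $\int_\Omega\vec{\phi}_p\cdot\vec{\sigma}_0\,dA=\lim_k\int_\Omega\vec{\phi}_p\cdot\vec{\sigma}_{n_k}\,dA=0$ for $p=1,\dots,N$. (As a byproduct $\vec{\sigma}_0$ then has zero mean, consistent with the Poincar\'e argument used earlier.)

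The step I expect to require genuine care is the traction-free boundary condition $\vec{\sigma}_0\vec{n}=\vec{0}$ on $\partial\Omega$, since boundary values are not controlled by $L^2(\Omega)$-convergence and must be accessed through the trace operator. My plan is to invoke that the trace map $H^1(\Omega)\to L^2(\partial\Omega)$ is bounded and, because $\partial\Omega$ is sufficiently regular, compact; weak $H^1$ convergence of $(\vec{\sigma}_{n_k})$ then upgrades (along a further subsequence if necessary) to strong $L^2(\partial\Omega)$ convergence of the boundary traces. Since the trace of each $\vec{\sigma}_{n_k}$ annihilates $\vec{n}$, so does the trace of the limit, giving $\vec{\sigma}_0\vec{n}=\vec{0}$ on $\partial\Omega$. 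Collecting everything, $\vec{\sigma}_0$ is symmetric, equilibrated, traction free, of finite $H^1$ norm, orthogonal to $\vec{\phi}_1,\dots,\vec{\phi}_N$, and of unit $L^2$ norm; hence $\vec{\sigma}_0\in\mathcal{P}$, as claimed.
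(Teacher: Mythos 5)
Your proof is correct and covers all five conditions from the paper's definition of $\mathcal{P}$, but for the two less routine conditions — $\mathrm{div}\,\vec{\sigma}_0=\vec{0}$ and $\vec{\sigma}_0\vec{n}=\vec{0}$ — you take a genuinely different route than the paper. For the divergence condition, the paper tests against arbitrary smooth compactly supported $\vec{\gamma}$, integrates by parts to move the derivative onto $\vec{\gamma}$, and uses H\"older's inequality together with the \emph{strong} $L^2(\Omega)$ convergence of $\vec{\sigma}_{n_k}\to\vec{\sigma}_0$; you instead observe that $\mathrm{div}:H^1\to L^2$ is a bounded linear operator, hence weak-to-weak continuous, so the weak $H^1$ limit of divergence-free fields is divergence-free. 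For the boundary condition, the paper again integrates by parts (against arbitrary smooth $\vec{\chi}$, not compactly supported), isolates the boundary term, and uses H\"older with strong $L^2(\Omega)$ convergence; you invoke compactness of the trace operator $H^1(\Omega)\to L^2(\partial\Omega)$ so that weak $H^1$ convergence upgrades to strong $L^2(\partial\Omega)$ convergence of boundary traces. Both routes are sound. The paper's is more elementary and self-contained (only integration by parts and H\"older), while yours is shorter and invokes heavier functional-analytic machinery (weak continuity of bounded operators, trace compactness) that itself requires sufficient boundary regularity — which the paper has assumed. One small remark: passing to ``a further subsequence if necessary'' in the trace step is unnecessary, since a compact operator maps a weakly convergent sequence to a strongly convergent one directly. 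Also, the parenthetical claim that orthogonality to $\mathcal{S}_N$ yields zero mean is misattributed — the zero-mean property comes from $\vec{\sigma}_0$ being divergence-free and traction-free (as the paper cites from Hoger), not from orthogonality to the $\vec{\phi}_p$; but this is a side comment and does not affect the validity of your argument.
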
	
\begin{proof}
	Recall that $\mathcal{P}$ consists of elements $\vec{\sigma}$ that 
	\begin{enumerate}[(i)]
		\item are divergence-free,
		\item are traction-free,
		\item have $\displaystyle\int_{\Omega} \nabla \vec{\sigma} \cdot \nabla \vec{\sigma} \, dA < \infty,$
		\item are orthogonal to $\mathcal{S}_N$, and
		\item satisfy $\norm{\vec{\sigma}}_{L^2}=1.$
	\end{enumerate}
	
	We now show that $\vec{\sigma}_0$ satisfies each of the above conditions (i) through (v).
  
\begin{enumerate}[(i)]
		\item \underline{$\mbox{div}\,\vec{\sigma}_0 = \vec{0}$}: 
		
		Since $\vec{\sigma}_0$ is in $H^1$, $\mbox{div}\,\vec{\sigma}_0$ is a vector field in $L^2$. Let $\vec{\gamma}           $ be an arbitrary smooth vector field compactly supported over $\Omega$.  Consider the inner product of $\mbox{div}\,\vec{\sigma}_0$ with $\vec{\gamma}$. Using integration by parts followed by H\"older's inequality, we have
		$$ 
		\int_{\Omega} \mbox{div}\,\vec{\sigma}_0 \cdot \vec{\gamma}  \,dA =\int_{\Omega} \mbox{div} \,(\vec{\sigma}_0-\vec{\sigma}_{n_k}) \cdot \vec{\gamma} \, dA \, + \int_{\Omega} \mbox{div}\,\vec{\sigma}_{n_k} \cdot \vec{\gamma}  \,dA $$
		$$=-\int_{\Omega} (\vec{\sigma}_0-\vec{\sigma}_{n_k}) \cdot \nabla \vec{\gamma} \, dA
		\leq \norm{\vec{\sigma}_0-\vec{\sigma}_{n_k}}_{L^2} \norm{\nabla \vec{\gamma}}_{L^2}.
		$$  
		Since $\vec{\gamma}$ is smooth, $\norm{\nabla \vec{\gamma}}_{L^2}$ is finite, and the right-most expression in the above equation goes to zero. So, we have
		$ \displaystyle \int_{\Omega} \mbox{div} \, \vec{\sigma}_0 \cdot \vec{\gamma} \, dA \leq 0.$ Choosing $-\vec{\gamma}$ in place of $\vec{\gamma}$ gives 
		$\displaystyle \int_{\Omega} \mbox{div} \, \vec{\sigma}_0 \cdot \vec{\gamma} \, dA \geq 0. $	
		We conclude that
		$$ \int_{\Omega} \mbox{div} \, \vec{\sigma}_0 \cdot \vec{\gamma} \, dA =0. $$
		
		Since $\vec{\gamma}$ is arbitrary, and smooth compactly supported functions are dense in $L^{2}$ \cite{brezis}, we conclude that
		$$\mbox{div} \, \vec{\sigma}_0 = \vec{0}.$$
		\item \underline{$\vec{\sigma}_0 \vec{n}=\vec{0}$}:
		
		 For an arbitrary smooth vector field $\vec{\chi}$, using integration by parts, we have
		$$0=\int_{\Omega} \mbox{div} \,(\vec{\sigma}_0-\vec{\sigma}_{n_k}) \cdot \vec{\chi} \, dA = \int_{\partial \Omega} \left\{(\vec{\sigma}_0-\vec{\sigma}_{n_k}) \vec{n}\right\} \cdot \vec{\chi} \, ds\, - \int_{\Omega} (\vec{\sigma}_0-\vec{\sigma}_{n_k}) \cdot \nabla \vec{\chi} \, dA,$$
		or, since $\vec{\sigma}_{n_k} \vec{n}=\vec{0}$, 
		$$ \int_{\partial \Omega} (\vec{\sigma}_0 \vec{n}) \cdot \vec{\chi} \, ds=\int_{\Omega} (\vec{\sigma}_0-\vec{\sigma}_{n_k}) \cdot \nabla \vec{\chi} \, dA.$$
		Again using H\"older's inequality, we obtain that 
		$$ \vec{\sigma}_0 \vec{n}=\vec{0}.$$
		\item \underline{$\displaystyle \int_{\Omega} \nabla \vec{\sigma} \cdot \nabla \vec{\sigma} \, dA < \infty$}: 
		
		This is obvious since $\vec{\sigma}_0$ belongs to $H^1$ (proposition 2).
		\item \underline{$\vec{\sigma}_0$ is orthogonal to $\mathcal{S}_N$}:

		Since $\vec{\sigma}_{n_k}$ is orthogonal to $\mathcal{S}_N$ for all $n_k$, and inner product is a continuous function \cite{Bossavit}, we conclude that $\vec{\sigma}_0$ is orthogonal to  $\mathcal{S}_N$.

		\item \underline{$\norm{\vec{\sigma}_0}_{L^2}=1$}:
		
		Again, since $\norm{\vec{\sigma}_{n_k}}_{L^2}=1$ for all $n_k$, and norm is a continuous function \cite{Bossavit}, we conclude that $\norm{\vec{\sigma}_0}_{L^2}=1$.
\end{enumerate} 
\end{proof}	
\begin{remark}
$\vec{\sigma}_{n_k}$ converges to $\vec{\sigma}_0$ strongly in the $L^2$ norm, and the corresponding $\hat{J}$ values converge to $\psi_0$. However, it is not clear if $\hat{J}(\vec{\sigma}_0)=\psi_0$, since as a function from $\mathcal{S} \subset L^2$ to $\mathbb{R}$, $\hat{J}$ is not continuous. In the following arguments, we show that $\hat{J}$ satisfies a weaker but sufficient condition, that of lower semi-continuity. 
\end{remark}    
\begin{definition}
	A functional $f$ is strong (respectively, weak) lower semi-continuous with respect to a norm if it satisfies 
$$ f(\vec{x}_0)\leq \displaystyle{\lim_{m \to \infty}} f(\vec{x}_m)$$
whenever a sequence $(\vec{x}_m)$ converges strongly (respectively, weakly) to $\vec{x}_0$ in that norm \cite{ekland}. 
\end{definition}
\begin{remark}
	Weak and strong lower semi-continuity of a functional are related as follows. In general, weak lower semi-continuity in a norm implies strong lower semi-continuity in that norm. The converse is not true. However, if the functional is strong lower semi-continuous and convex, and is defined on a convex set, then it is weak lower semi-continuous \cite{ekland}.  
\end{remark} 
\begin{proposition}
	$\hat{J}$ is strong lower semi-continuous in $H^1$.
\end{proposition}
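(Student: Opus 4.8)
The plan is to exploit the fact that on $H^1$ the functional $\hat{J}$ is nothing but the $L^2$ norm of the gradient, i.e.\ a seminorm, and seminorms are automatically continuous with respect to the norm they are subordinate to. Concretely, first I would write $\hat{J}(\vec{\sigma}) = \norm{\nabla \vec{\sigma}}_{L^2}$ and invoke the reverse triangle inequality together with the definition of the $H^1$ norm: for any $\vec{\sigma}_1, \vec{\sigma}_2 \in \mathcal{S}$,
\[
\left| \hat{J}(\vec{\sigma}_1) - \hat{J}(\vec{\sigma}_2) \right| = \left| \norm{\nabla \vec{\sigma}_1}_{L^2} - \norm{\nabla \vec{\sigma}_2}_{L^2} \right| \le \norm{\nabla(\vec{\sigma}_1 - \vec{\sigma}_2)}_{L^2} \le \norm{\vec{\sigma}_1 - \vec{\sigma}_2}_{H^1}.
\]
Thus $\hat{J}$ is in fact Lipschitz continuous with respect to the $H^1$ norm. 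In particular, if $(\vec{\sigma}_m)$ converges strongly to $\vec{\sigma}_0$ in $H^1$, then $\hat{J}(\vec{\sigma}_m) \to \hat{J}(\vec{\sigma}_0)$, which trivially gives $\hat{J}(\vec{\sigma}_0) \le \lim_{m\to\infty} \hat{J}(\vec{\sigma}_m)$; hence $\hat{J}$ is strong lower semi-continuous in $H^1$, as claimed. (It is actually strongly continuous there; the point of stating only semi-continuity is the use to come.)

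I would then emphasize why this alone does not finish the existence argument: the subsequence $(\vec{\sigma}_{n_k})$ produced in Proposition~2 converges to $\vec{\sigma}_0$ only \emph{weakly} in $H^1$. To bridge this gap I would invoke the Remark preceding the proposition: $\hat{J}$ is a seminorm, hence convex (by homogeneity and the triangle inequality for $\norm{\nabla\,\cdot\,}_{L^2}$), and it is defined on the linear, therefore convex, set $\mathcal{S}$ (equivalently on $\mathcal{S}_{N\perp}$, which is a closed subspace of $\mathcal{S}$ and so also convex); a strong lower semi-continuous convex functional on a convex set is weak lower semi-continuous. Applying weak lower semi-continuity along $(\vec{\sigma}_{n_k})$ gives $\hat{J}(\vec{\sigma}_0) \le \liminf_{k\to\infty} \hat{J}(\vec{\sigma}_{n_k}) = \psi_0$, while $\vec{\sigma}_0 \in \mathcal{P}$ (Proposition~3) forces $\hat{J}(\vec{\sigma}_0) \ge \psi_0$ since $\psi_0$ is the infimum of $\hat{J}$ over $\mathcal{P}$. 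Hence $\hat{J}(\vec{\sigma}_0) = \psi_0$, and $\vec{\sigma}_0 \in \mathcal{P}$ is the desired extremizer of $J_0$ in $\mathcal{S}_{N\perp}$.

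I do not expect any genuine obstacle in the proposition itself: continuity of a seminorm with respect to its own ambient norm is immediate. The only points requiring a little care are (i) keeping the distinction straight between the behaviour of $\hat{J}$ as a map from $L^2$ (discontinuous, per the earlier Remark) and from $H^1$ (Lipschitz), and (ii) being explicit that the set on which $\hat{J}$ is considered is convex, so that the strong-to-weak lower semi-continuity implication of the Remark applies. The substantively delicate steps of the whole existence proof lie elsewhere — in the weak-compactness extraction of Propositions~1 and~2, and in the convexity-based passage from strong to weak lower semi-continuity — rather than in this proposition.
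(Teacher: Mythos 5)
Your proof is correct and follows essentially the same route as the paper: both establish strong (indeed Lipschitz) continuity of $\hat{J}$ with respect to the $H^1$ norm via the reverse triangle inequality for the map $\vec{\sigma} \mapsto \norm{\nabla\vec{\sigma}}_{L^2}$ together with the bound $\norm{\nabla\vec{\sigma}}_{L^2} \le \norm{\vec{\sigma}}_{H^1}$. The only cosmetic difference is that the paper first verifies, using Poincar\'e's inequality, that $\hat{J}$ is a genuine norm on $\mathcal{S}$ before invoking the reverse triangle inequality, whereas you correctly observe that the seminorm property (homogeneity and the triangle inequality) already suffices for this step.
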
	     
\begin{proof}
	We first show that the quantity defined as $\norm{\vec{\sigma}}_{\hat{J}}= \hat{J}(\vec{\sigma})$ is a norm over set $\mathcal{S}$. Since residual stresses have zero mean, $\norm{\vec{\sigma}}_{\hat{J}}$ is zero only when $\vec{\sigma}$ is zero. Also, $\norm{\alpha \vec{\sigma}}_{\hat{J}}=|\alpha| \norm{\vec{\sigma}}_{\hat{J}}$ for a real number $\alpha$. Finally, using H\"older's inequality, it can easily be shown that $\hat{J}(\vec{\sigma})$ satisfies the triangle inequality. So, $\norm{\vec{\sigma}}_{\hat{J}}$ is a norm. 
	
	Next, we note that
	$$ 0\leq \left(\int_{\Omega} \nabla \vec{\sigma} \cdot \nabla \vec{\sigma} \, dA\right)^{\frac{1}{2}} \leq \left(\int_{\Omega} \vec{\sigma} \cdot \vec{\sigma} \, dA + \int_{\Omega} \nabla \vec{\sigma} \cdot \nabla \vec{\sigma} \, dA\right)^{\frac{1}{2}},$$
	or
	$$0\leq\norm{\vec{\sigma}}_{\hat{J}} \leq \norm{\vec{\sigma}}_{H^1}.$$
	 
	Therefore, if a sequence $(\tilde{\vec{\sigma}}_m)$ converges strongly to some $\tilde{\vec{\sigma}}_0$ in the $H^1$ norm, i.e. $\displaystyle{\lim_{m \to \infty}} \norm{\tilde{\vec{\sigma}}_0-\tilde{\vec{\sigma}}_m}_{H^1} = 0$, it follows from the above that $\displaystyle{\lim_{m \to \infty}} \norm{\tilde{\vec{\sigma}}_0-\tilde{\vec{\sigma}}_m}_{\hat{J}} = 0$. This implies that  
	$$ \displaystyle{\lim_{m \to \infty}} \hat{J} (\tilde{\vec{\sigma}}_m)=\hat{J}(\tilde{\vec{\sigma}}_0).$$
	
	Hence, $\hat{J}$ is strong lower semi-continuous in $H^1$.
\end{proof}
 
\begin{proposition}
	$\hat{J}$ is weak lower semi-continuous in $H^1$.
\end{proposition}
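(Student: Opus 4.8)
The plan is to obtain weak lower semi-continuity for free from the strong lower semi-continuity already proved in Proposition 6, by combining it with the convexity of $\hat{J}$ and invoking the general principle recorded in the Remark above: a functional that is strong lower semi-continuous in a norm, convex, and defined on a convex set, is automatically weak lower semi-continuous in that norm.

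First I would record that $\hat{J}$ is convex on $\mathcal{S}$. This was essentially done inside the proof of Proposition 6, where it was shown that $\norm{\vec{\sigma}}_{\hat{J}} = \hat{J}(\vec{\sigma})$ is a genuine norm over $\mathcal{S}$: positivity uses the zero-mean property of residual stresses, positive homogeneity is immediate, and the triangle inequality follows from H\"older's inequality. Since every norm is a convex functional, $\hat{J}$ is convex. Moreover $\mathcal{S}$ is a linear subspace of $H^1$, hence a convex set; the same is true of the orthogonal complement $\mathcal{S}_{N\perp}$, on which we ultimately want the conclusion in appendix \ref{snperp}.

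With convexity and a convex domain in hand, the Remark applies verbatim with the norm taken to be $\norm{\cdot}_{H^1}$: strong lower semi-continuity of $\hat{J}$ in $H^1$ (Proposition 6) together with convexity upgrades to weak lower semi-continuity of $\hat{J}$ in $H^1$, which is the assertion. If a self-contained argument is wanted instead of citing the Remark, one can argue by contradiction: suppose $\vec{\sigma}_m \rightharpoonup \vec{\sigma}_0$ weakly in $H^1$ with $\liminf_{m\to\infty} \hat{J}(\vec{\sigma}_m) < \hat{J}(\vec{\sigma}_0)$, pass to a subsequence along which $\hat{J}(\vec{\sigma}_m)$ converges, apply Mazur's lemma to produce convex combinations of the $\vec{\sigma}_m$ that converge to $\vec{\sigma}_0$ strongly in $H^1$, and then use convexity of $\hat{J}$ together with the strong lower semi-continuity of Proposition 6 to contradict the strict inequality.

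The only point demanding care is making sure the hypotheses of the Remark (or of Mazur's lemma in the alternative route) are verified on the precise set of interest; the analytical substance has already been discharged in Proposition 6, and what remains here is purely the convexity-based passage from strong to weak lower semi-continuity.
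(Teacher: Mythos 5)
Your argument is correct and is essentially the paper's own: establish that $\hat{J}$ is convex (because it is a norm, as verified while proving strong lower semi-continuity), note that the domain is convex, and invoke the stated Remark that strong lower semi-continuity plus convexity on a convex set yields weak lower semi-continuity. The only cosmetic difference is the choice of convex set --- the paper works on the closed unit ball $\bar{\mathcal{P}}$ of $\mathcal{S}_{N\perp}$, while you take the linear subspace itself; both are convex and both contain the minimizing sequence and its weak limit, so the conclusion is the same, and your optional Mazur's-lemma route is simply the standard proof of the Remark spelled out.
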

\begin{proof}
	Since all norms are convex, $\hat{J}$ is a convex functional. The set $\bar{\mathcal{P}}$ defined earlier is convex. Including proposition 4, we conclude that $\hat{J}$ is weak lower semi-continuous in $H^1$ over $\bar{\mathcal{P}}$.
\end{proof}	
\begin{proposition}
	$\hat{J} (\vec{\sigma}_0)=\psi_0.$
\end{proposition}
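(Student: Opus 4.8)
The plan is to assemble the pieces already established in Propositions 1 through 6; no new analytic estimates are needed. Recall that $\psi_0$ was defined as the greatest lower bound of $\hat{J}$ on $\mathcal{P}$, and that $(\vec{\sigma}_n)$ is a minimizing sequence in $\mathcal{P}$ with $\hat{J}(\vec{\sigma}_n) \to \psi_0$. By Proposition 3 we have $\vec{\sigma}_0 \in \mathcal{P}$, and since $\psi_0$ is a lower bound for $\hat{J}$ on $\mathcal{P}$ this immediately gives
\[ \hat{J}(\vec{\sigma}_0) \geq \psi_0. \]
It remains only to prove the reverse inequality.

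For that, I would invoke the weak lower semi-continuity of $\hat{J}$ in $H^1$ over $\bar{\mathcal{P}}$, which is exactly Proposition 6. By Proposition 2, the subsequence $(\vec{\sigma}_{n_k})$ converges to $\vec{\sigma}_0$ weakly in the $H^1$ norm, and every $\vec{\sigma}_{n_k}$ lies in $\mathcal{P} \subset \bar{\mathcal{P}}$ while $\vec{\sigma}_0 \in \bar{\mathcal{P}}$ as well. Hence, by the definition of weak lower semi-continuity,
\[ \hat{J}(\vec{\sigma}_0) \leq \lim_{k \to \infty} \hat{J}(\vec{\sigma}_{n_k}). \]
Since $(\vec{\sigma}_{n_k})$ is a subsequence of the minimizing sequence $(\vec{\sigma}_n)$, the right-hand side equals $\psi_0$, so $\hat{J}(\vec{\sigma}_0) \leq \psi_0$.

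Combining the two inequalities yields $\hat{J}(\vec{\sigma}_0) = \psi_0$. Because minimizing $J_0$ and minimizing $\hat{J} = (2 J_0)^{1/2}$ are equivalent on $\mathcal{P}$, this shows that the unit ball $\mathcal{P}$ of $\mathcal{S}_{N\perp}$ does contain an extremizer of $J_0$, which was the goal of this appendix. I do not expect any genuine obstacle here: the only subtlety, already flagged in the remark preceding Proposition 4, is that $\hat{J}$ is discontinuous as a map on $L^2$, so one must argue through weak $H^1$ convergence and lower semi-continuity rather than attempting to pass to the strong $L^2$ limit directly; all of the substantive work (the Poincar\'e bound, the compactness extraction, stability of the constraints under the limit, and convexity/semi-continuity of the gradient seminorm) has already been carried out in the preceding propositions.
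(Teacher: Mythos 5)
Your proof is correct and follows essentially the same approach as the paper: both arguments establish $\hat{J}(\vec{\sigma}_0) \leq \psi_0$ from weak lower semi-continuity (Proposition 6) applied along the weakly $H^1$-convergent subsequence (Proposition 2), and $\hat{J}(\vec{\sigma}_0) \geq \psi_0$ from $\vec{\sigma}_0 \in \mathcal{P}$ (Proposition 3) together with $\psi_0$ being the infimum. The only difference is the order in which you present the two inequalities, which is immaterial.
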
	   
\begin{proof}
Since $\hat{J}$ is weak lower semi-continuous in $H^1$, and $(\vec{\sigma}_{n_k}) \in \bar{\mathcal{P}}$ converges weakly to $\vec{\sigma}_0 \in \bar{\mathcal{P}}$ in the $H^1$ norm,
$$ \hat{J} (\vec{\sigma}_0) \leq \displaystyle{\lim_{n_k \to \infty}} \hat{J}(\vec{\sigma}_{n_k})=\psi_0.$$
But since $\psi_0$ is the greatest lower bound of $\hat{J}$ over $\mathcal{P}$, and $\vec{\sigma}_0$ belongs to $\mathcal{P}$, we have 
$$\psi_0 \leq \hat{J} (\vec{\sigma}_0).$$ 
Hence, 
$$\hat{J} (\vec{\sigma}_0)=\psi_0.$$
\end{proof}

\begin{remark}
	We have now proved that $\vec{\sigma}_0$ is in $\mathcal{P}$, minimizes $\hat{J}$, and hence minimizes $J_0$. 
\end{remark} 

\section{Proof that the eigenfunctions form a basis for $\mathcal{S}$} \label{infinity}
Assume that the span of the infinitely many eigenfunctions is a subspace $\mathcal{S}_{\infty}$ which is a proper subspace of $\mathcal{S}$.
It is not clear what the dimension of its orthogonal complement $\mathcal{S}_{\infty \perp}$ is.

If the dimension is 2 or more, then it contains infinitely many elements of unit norm, and the arguments used in the first part of section \ref{basis_sec} can be applied and the same contradiction is
obtained.

If the dimension of $\mathcal{S}_{\infty \perp}$ is 1, then we have a unique (up to a scalar multiple) $\vec{\tau}$ which lies in $\mathcal{S}_{\infty \perp}$. Normalizing that $\vec{\tau}$,
we find that we cannot take variations of it while keeping it inside $\mathcal{S}_{\infty \perp}$. This precludes variational equations, and a different argument is easier.

The eigenfunctions $\{ \vec{\phi}_k \}$ along with $\vec{\tau}$ form a basis for $\mathcal{S}$. The issue is solely whether $\vec{\tau}$, too, is an eigenfunction.

Let us now consider a different extremization
problem, namely: find a $\vec{\sigma}$ in $\mathcal{S}$ that extremizes $\displaystyle\frac{1}{2}\int_{\Omega} \nabla \vec{\sigma} \cdot \nabla \vec{\sigma} \, dA$ subject to the
condition $\displaystyle\int_{\Omega} \vec{\sigma} \cdot \vec{\tau} \, dA = 1$. We make no assumptions about how the extremizing $\vec{\sigma}$ might be related to the eigenfunctions $\{ \vec{\phi}_k \}$.

We approach this new problem in two ways: (i) using the calculus of variations, and (ii) directly.

Using the calculus of variations, we obtain:
\begin{equation}
\begin{array}{cccl}
-\Delta \vec{\sigma} + \nabla_s \vec{\mu}  = \lambda \vec{\tau}  & \text{ and } & \mbox{div} \, \vec{\sigma} = \vec{0} & \text{ in} \hspace{1mm} \Omega,\\
\vec{\sigma n} = \vec{0} & \text{ and } & \nabla_n \vec{\sigma} \cdot (\vec{t} \otimes \vec{t}) = 0 & \text{ on} \hspace{1mm} \partial \Omega.
\end{array}
\label{dd}
\end{equation}

Using a direct approach, we can assume a solution of the form
$$\vec{\sigma} = a_0 \vec{\tau} + \sum_{k=1}^{\infty} a_k \vec{\phi}_k.$$
The above representation contains the solution because by assumption we have a basis. Directly substituting into
$$\frac{1}{2}\int_{\Omega} \nabla \vec{\sigma} \cdot \nabla \vec{\sigma}  \, dA$$
and using orthogonality, we find that we are extremizing
$$\frac{1}{2} \left ( a_0^2 \int_{\Omega} \nabla \vec{\tau} \cdot \nabla \vec{\tau} \, dA + \sum_{k=1}^{\infty} a_k^2 \lambda_k \right ),$$
subject to the constraint
$$\int_{\Omega} \vec{\sigma} \cdot \vec{\tau} \, dA = a_0 =1.$$
In the above, the $\lambda_k$'s are the eigenvalues already found.
The minimizer is obvious: $a_0 = 1$, and $a_k = 0$ for $k=1,2,3,\cdots$. It follows that the extremizing $\vec{\sigma}$ is exactly $\vec{\tau}$.
Therefore, $\vec{\sigma} = \vec{\tau}$ must also satisfy Eqs.\ \ref{dd}, obtained using the calculus of variations. We conclude that $\vec{\tau}$ is an eigenfunction
after all.

\section{Details of the finite element method} \label{FEM}
To solve the eigenvalue problem in Eq.\ \ref{three_eqns} using FEM, we first express it in Cartesian coordinates, so that $$-\Delta \vec{\sigma} + \nabla_s \vec{\mu} = \lambda \vec{\sigma}$$ becomes
\begin{equation}
\label{fem}
\begin{split}
-\frac{\partial^2 \sigma_{xx}}{\partial x^2} - \frac{\partial^2 \sigma_{xx}}{\partial y^2} + \frac{\partial \mu_x}{\partial x} = \lambda \sigma_{xx}, \\
-\frac{\partial^2 \sigma_{yy}}{\partial x^2} - \frac{\partial^2 \sigma_{yy}}{\partial y^2} + \frac{\partial \mu_y}{\partial y} = \lambda \sigma_{yy},  \\ 
-\frac{\partial^2 \sigma_{xy}}{\partial x^2} - \frac{\partial^2 \sigma_{xy}}{\partial y^2} + \frac{1}{2} \left(\frac{\partial \mu_x}{\partial y}+\frac{\partial \mu_y}{\partial x}\right) = \lambda \sigma_{xy}.
\end{split}
\end{equation}
Equilibrium,
$$ \mbox{div} \, \vec{\sigma} = \vec{0},$$ 
becomes
\begin{equation}
\label{eqb}
\begin{split}
\frac{\partial \sigma_{xx}}{\partial x} + \frac{\partial \sigma_{xy}}{\partial y} =0, \\
\frac{\partial \sigma_{xy}}{\partial x} + \frac{\partial \sigma_{yy}}{\partial y} =0. 
\end{split}
\end{equation}
The traction-free boundary condition 
$$ \vec{\sigma} \vec{n} = \vec{0}$$ 
gives
\begin{equation}
\label{bcxy}
\begin{split}
\sigma_{xx}n_x + \sigma_{xy} n_y=0, \\
\sigma_{xy}n_x + \sigma_{yy} n_y=0. 
\end{split}
\end{equation}
The final, and natural, boundary condition 
$$ \nabla_n \vec{\sigma} \cdot (\vec{t} \otimes \vec{t}) = 0$$ 
becomes
\begin{equation}
\label{nbc}
\frac{\partial \sigma_{xx}}{\partial x} n_x n_y^2 + \frac{\partial \sigma_{xx}}{\partial y} n_y^3 + \frac{\partial \sigma_{yy}}{\partial x} n_x^3+ \frac{\partial \sigma_{yy}}{\partial y} n_x^2 n_y -2\frac{\partial \sigma_{xy}}{\partial x} n_x^2 n_y - 2 \frac{\partial \sigma_{xy}}{\partial y} n_x n_y^2=0.
\end{equation}

We discretise the domain with a mesh containing `$e$' eight noded quadrilateral \textit{serendipity} elements and `$n$' nodes. Figure \ref{mesh} shows a sample mesh for $e=4$ and $n=20$ for a square domain. We use the FEM software package Abaqus to generate the mesh.

We use piecewise cubic shape functions for the stress components. The shape function that takes the value 1 at node $p$, and zero at all other nodes, is denoted as $N_{p}$. Each such shape function is cubic within individual elements, continuous on element edges, and looks like a tent peaking at node $p$.

We use piecewise constant shape functions for components of the Lagrange multiplier vector field $\vec{\mu}$. The piecewise constant shape function that is 1 on element $q$, and zero on all the other elements, is denoted as $M_q$.  
\begin{figure}[t!]
	\centering
	\includegraphics [scale=0.3]{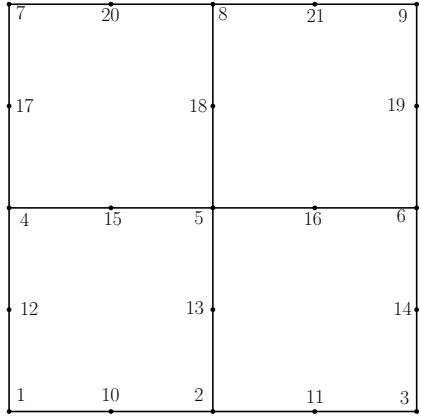}
	\caption{A sample 2 $\times$ 2 mesh of serendipity elements}
	\label{mesh}
\end{figure}

The discretised dependent variables are written as
\begin{equation}
\label{expansions}
\begin{split}
\sigma_{xx}=\sigma_{xx_{1}}N_{1}+\sigma_{xx_{2}}N_{2}+...+\sigma_{xx_{n}}N_{n}, \\
\sigma_{yy}=\sigma_{yy_{1}}N_{1}+\sigma_{yy_{2}}N_{2}+...+\sigma_{yy_{n}}N_{n}, \\
\sigma_{xy}=\sigma_{xy_{1}}N_{1}+\sigma_{xy_{2}}N_{2}+...+\sigma_{xy_{n}}N_{n}, \\
\mu_{x}=\mu_{x_{1}}M_{1}+\mu_{x_{2}}M_{2}+...+\mu_{x_{e}}M_{e}, \\ 
\mu_{y}=\mu_{y_{1}}M_{1}+\mu_{y_{2}}M_{2}+...+\mu_{y_{e}}M_{e},
\end{split}
\end{equation}
where $\sigma_{xx_{p}}$ denotes the value of the discretized $\sigma_{xx}$ component at the $p^{\, th}$ node (likewise for $\sigma_{yy}$ and $\sigma_{xy}$); and where $\mu_{x_q}$ denotes the value of the discretized $\mu_x$ over element $q$ (likewise for $\mu_y$). 

We arrange the $3n+2e$ unknowns in a column vector $c$ as follows:
\drop{
\begin{align}
\label{cdef}
c &= \begin{Bmatrix}
\sigma_{xx_1} \\
\vdots \\
\sigma_{xx_n}\\
\sigma_{yy_1} \\
\vdots \\
\sigma_{yy_n}\\
\sigma_{xy_1} \\
\vdots \\
\sigma_{xy_n}\\
\mu_{x_1} \\
\vdots \\
\mu_{x_e}\\
\mu_{y_1} \\
\vdots \\
\mu_{y_e}\\
\end{Bmatrix}.
\end{align}
}
\begin{equation}
\label{cdef}
c=\left\{\sigma_{xx_1}\,.\,.\,.\,\sigma_{xx_n}\,\,\sigma_{yy_1}\,.\,.\,.\,\sigma_{yy_n}\,\,\sigma_{xy_1}\,.\,.\,.\,\sigma_{xy_n}\,\,\mu_{x_1}\,.\,.\,.\,\mu_{x_e}\,\,\mu_{y_1}\,.\,.\,.\,\mu_{y_e}\right\}^T,
\end{equation}
$T$ denoting transpose.
We need $3n+2e$ equations. For the first $n$ equations, we take the inner product of first of Eqs.\ \ref{fem} with $N_1$ through $N_n$. For instance, the first such resulting equation is:
\begin{equation}
\nonumber
\int_{\Omega}\left(- \Delta \sigma_{xx}+\mu_{x,x}-\lambda \sigma_{xx} \right)N_{1}\,dA=0.
\end{equation}
Using integration by parts, we obtain
\begin{equation}
\nonumber
\begin{array}{rr}
\displaystyle \int_{\partial\Omega} \left(-\frac{\partial \sigma_{xx}}{\partial x}n_{x}-\frac{\partial \sigma_{xx}}{\partial y}n_{y}+\mu_{x} n_{x}\right)N_{1}\,ds-\int_{\Omega}\left(-\frac{\partial \sigma_{xx}}{\partial x}\frac{\partial N_{1}}{\partial x}-\frac{\partial \sigma_{xx}}{\partial y}\frac{\partial N_{1}}{\partial y}+\mu_{x} \frac{\partial N_{1}}{\partial x}\right)dA \\
= \displaystyle \lambda \int_{\Omega}\sigma_{xx}N_{1}\,dA.
\end{array}
\end{equation}
We substitute from Eqs.\ \ref{expansions} to obtain
\begin{equation}
\begin{split}
\sum_{r=1}^{n}\sigma_{xx_{r}}\left\{\int_{\partial\Omega}-\left(\frac{\partial N_{r}}{\partial x}n_{x}+\frac{\partial N_{r}}{\partial y}n_{y}\right)N_{1}\,ds+\int_{\Omega}\left(\frac{\partial N_{r}}{\partial x}\frac{\partial N_{1}}{\partial x}+\frac{\partial N_{r}}{\partial y}\frac{\partial N_{1}}{\partial y}\right)dA\right\}\\
+\sum_{s=1}^{e}\mu_{x_{s}}\left(\int_{\partial\Omega}N_{1}n_{x}\,ds-\int_{\Omega}\frac{\partial N_{1}}{\partial x}\,dA\right)=\lambda\sum_{r=1}^{n}\sigma_{xx_{r}}\int_{\Omega}N_{r}N_{1}\,dA.
\end{split}
\label{subs}
\end{equation}

The various integrals in the above equation are all meaningful because each $N_r$ as well as its gradient $\nabla N_r$ are bounded everywhere
in the domain, including on the boundary; and the shape functions used for $\vec{\mu}$ are piecewise constant\footnote{%
	The stress components are in the Sobolev space $H^1(\Omega)$; their restrictions to the boundary are
	in $H^1(\partial \Omega)$; the $\mu$-components are in the Hilbert space $L^2(\Omega)$; and their restrictions to the boundary
	are in $L^2(\partial \Omega)$.}.
Additionally, we note that the boundary integrals above remain continuous even if $n_x$ and $n_y$ have a finite number of discontinuities,
i.e., the domain can have a finite number of corners.

Equation \ref{subs} (recall Eq.\ \ref{cdef}) can be written compactly as
\begin{equation}
\nonumber
a_{1} c=\lambda a_{2} c,
\end{equation}
where $a_1$ and $a_2$ are row vectors of dimensions $1 \times (3n+2e)$. We obtain $n-1$ more equations by taking the inner product of the first of Eqs.\ \eqref{fem} with $N_{2}$ through $N_{n}$.

Similarly, we obtain $2n$ more equations by taking the inner product of second and third of Eqs.\ \ref{fem} with $N_1$ through $N_n$. 

Finally, we obtain the remaining $2e$ equations by taking the inner product of both of Eqs.\ \ref{eqb} with each of
$M_1$ through $M_e$.
It can be verified easily, as for Eq.\ \ref{subs}, that all integrals in those equations are well behaved.

The complete set of $3n+2e$ equations can be written in a compact form as follows:
\begin{equation}
\label{matrixeq2}
A_1 c=\lambda A_2 c,
\end{equation} 
where $A_1$ and $A_2$ are square matrices of dimensions $(3n+2e) \times (3n+2e)$.
We have not imposed the boundary conditions (Eqs.\ \ref{bcxy} and \ref{nbc}) yet. We have enforced these in the weak form as well (in an integral sense, on the domain boundary; details omitted). If there are $b$ nodes on the boundary, there are $3b$ conditions to be imposed. 
The boundary conditions can be expressed in the form $Bc=0$, where $B$ is a $3b \times (3n+2e)$ matrix.

This means that the vector of unknowns is $3n+2e-3b$ dimensional.
For problems of moderate size, such as we solve here, it is conceptually simplest to compute a matrix $Q$ whose columns span the
subspace orthogonal to the rows of $B$. 
Then, Eq.\ \eqref{matrixeq2} along with boundary conditions can be reduced to an equation of the form
\begin{equation}
\label{matrixeqreduced2}
\tilde{{A_{1}}}\tilde{{c}}=\lambda\tilde{{A_{2}}}\tilde{{c}},
\end{equation}
where $c=Q \tilde{{c}}$, $\tilde{A_1} = Q^T A_1 Q$ and $\tilde{A_2} = Q^T A_2 Q$.
Equation \eqref{matrixeqreduced2} is an eigenvalue problem. One last point is that, because of the constraints in the problem, 
several eigenvalues are infinite. So we solve Eq.\ \ref{matrixeqreduced2} in the form
$$\tilde{{A_{2}}}\tilde{{c}}=\frac{1}{\lambda} \tilde{{A_{1}}}\tilde{{c}},$$
select the {\em largest} eigenvalues $1/\lambda$, and take their reciprocals.
Finally, we arrange the eigenvectors (eigenfunctions) in order of increasing $\lambda$. 

We have computed the eigenfunctions using the above formulation for three domains: an annular domain, a square domain, and an arbitrarily shaped domain (see figures \ref{annular_modes}, \ref{square_modes} and \ref{arbitrary_shapes}). We have performed convergence tests by refining the mesh, and displayed results in the main paper using a level of refinement at which the eigenvalues varied within tiny fractions of one percent. For instance, the domain corresponding to figure \ref{square_modes} was discretized using a mesh of $2500$ elements.     

Our numerical results indicate that our formulation is stable. However, we have not formally verified the well known inf-sup condition (also known as the Ladyzhenskaya-Babu\v ska-Brezzi condition) for our mixed finite element formulation. We refer the interested reader to Bathe's work \cite{Bathe,Bathe2,Bathe3} and the references therein (also see \cite{Falk} for mixed finite element formulations in linear elasticity). Here, we offer the following positive and constructive points to demonstrate the correctness of our finite element results.
\begin{enumerate}
	\item First, since our problem is similar to the Stokes problem, we observe on page 329, Table 4.8 of {\em Finite element procedures} by Bathe \cite{Bathe}, that the 8/1 element (the eight-noded quadrilateral serendipity element with piecewise constant pressure, which is what we have used) is stable for the Stokes problem. This does not {\em guarantee} that it will be stable for our problem, but it is indicative, and our results have not shown instabilities.

\item Second, the qualitative consequence of instability is the appearance of spurious checkerboard type patterns in the solution. In many solutions, at different mesh refinements, for different domain shapes, we have {\em not seen} such checkerboard patterns with our 8/1 element.

\item Third, with other elements, which are unsuitable, we did indeed obtain checkerboard patterns. Specifically, we did so with 4/1 elements, consistent with Table 4.8 in \cite{Bathe}

Conversely, with 9/3 and 9/4--c, two other suitable elements mentioned in the table, we obtained similar results as with the 8/1 element, with no spurious modes. 

\item Fourth, for an annular domain, we have used an independent semi-analytical method developed in section 6.2, where numerical ODE solution is used. The results match our FE solutions near-perfectly. This tells us that our FE solutions are not only stable (no checkerboard) but also accurate.

\item Finally, for the square domain shown in figure \ref{square_modes}, we demonstrate convergence numerically. We consider four meshes: $5 \times 5$, $10 \times 10$, $20 \times 20$ and $40 \times 40$ elements. The first ten eigenvalues from these four meshes are plotted in figure \ref{refine}. Convergence is clear. 
	
	\begin{figure}[h!]
		\centering
		\includegraphics [scale=0.6]{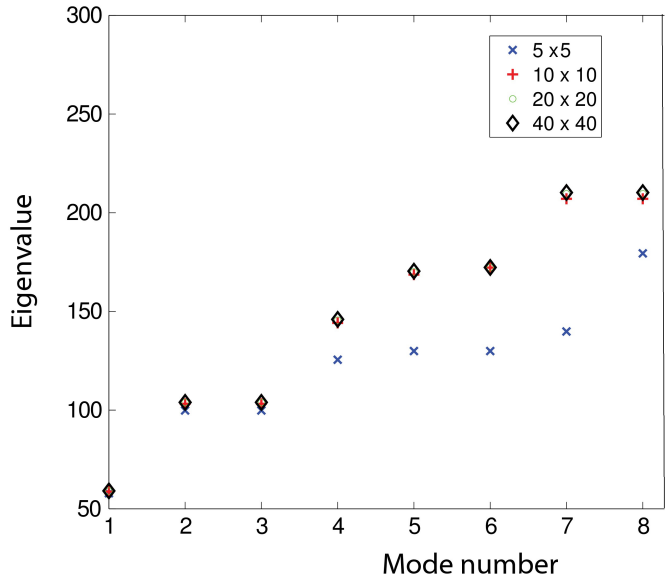}
		\caption{Convergence of eigenvalues upon refinement.}
		\label{refine}
	\end{figure}
	
\end{enumerate}

To close this section, we report the time required to compute the eigenfunctions using our own code in Matlab, on a personal computer with $8^{\mbox{th}}$ generation i5 processor. Computation of the first 100 eigenfunctions on a square domain discretized with uniform meshes of $5 \times 5$, $10 \times 10$, $20 \times 20$, $40 \times 40$, $80 \times 80$ and $160 \times 160$ elements takes 0.2, 0.6, 1.6, 10, 70 and 850 seconds respectively. The computation times are plotted in figure \ref{comptime} on a log-log scale.
      \begin{figure}[h!]
		\centering
		\includegraphics [scale=0.62]{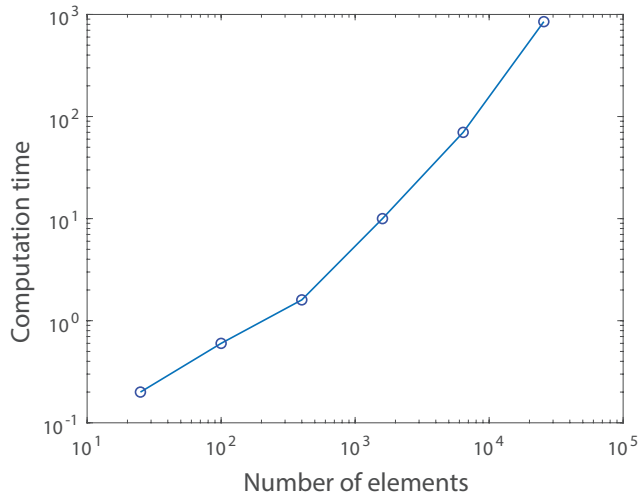}
		\caption{Time (in seconds) for computation of first one hundred eigenfunctions for different mesh refinements (log-log scale). The data points are joined by straight lines for visibility alone.}
		\label{comptime}
	\end{figure}

\section{Fitting of a residual stress field obtained from a metal forming simulation in Abaqus} \label{rolling}
In this section, we describe the process of generating a residual stress using the finite element software package Abaqus by simulating the 2-D metal forming process of rolling, and fit the residual stress field using our eigenfunctions computed on the same (final) mesh with our finite element code as described above.
 
\begin{subappendices}
\subsection{Details of the rolling simulation in Abaqus}
The schematic of the set-up is shown in figure \ref{schem}. The simulation is carried out quasi-statically, in the implicit analysis mode of Abaqus. The general description of the simulation is as follows: the workpiece is first nudged to the right using a rigid punch moving with a constant velocity, until the former comes in contact with the rotating rigid rollers. The friction between the workpiece and the rollers pulls the workpiece away from the rigid punch, and the formed workpiece is then extruded at the other end. 
\begin{figure}[!h]
	\centering
	\includegraphics[scale=0.3]{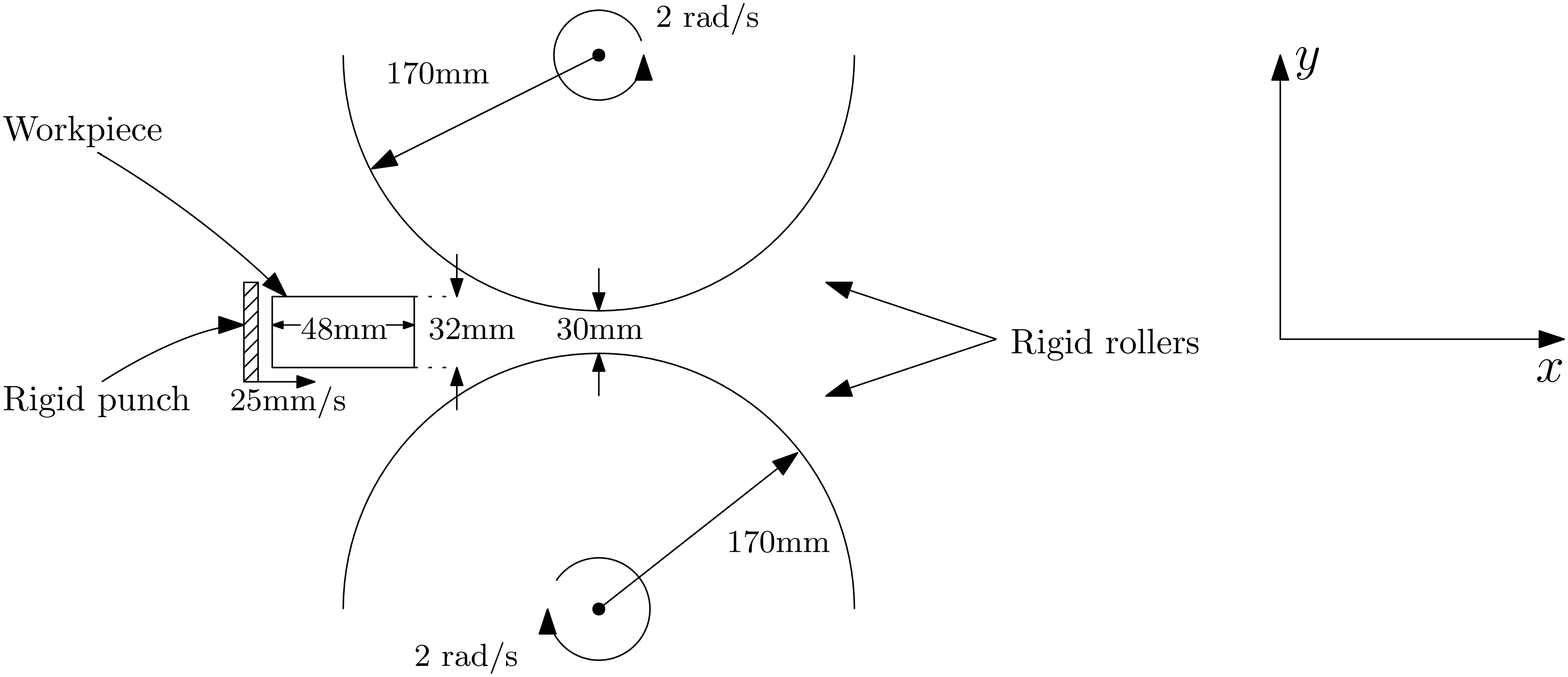}
	\caption{Schematic of the rolling simulation (figure not to scale).}
	\label{schem}
\end{figure}

The geometric, material and contact details are as follows. The workpice is 48 mm long and 32 mm wide, and is made of an isotropic elasto-plastic material with linear strain hardening. Its Young's modulus is 210 GPa, Poisson's ratio is 0.3, yield stress is 400 MPa, and slope of the hardening curve is such that the equivalent plastic strain is 10 when the von Mises stress is 6000 MPa. Since the process is quasi-static, density of the workpiece is not required. The punch is rigid. It moves with a velocity of 25 mm/s to right. Both rollers are rigid and each has a radius of 170 mm. They rotate at 2 rad/s in the directions indicated in figure \ref{schem}. The minimum gap between the rollers is 30 mm, so that the width of the formed workpiece is reduced by 2 mm in the process.   A `hard' normal contact is assumed between the punch and the workpiece, as well as the workpiece and the rollers. `Penalty' friction with a coefficient of 0.3 is assumed in each of these contacts. 

The mesh details are as follows. The rollers and punch are meshed with `discrete rigid' and `analytical rigid' line elements, respectively.  The workpiece is meshed with 20184 plane strain four-noded quadrilateral elements of size 0.275 mm and aspect ratio 1. Mesh convergence tests are performed by comparing the nodal values of different stress components along material lines for different element sizes, based on which we conclude that an element size of 0.275 mm provides a converged solution. 

The simulation is quasi-static, and is carried out in an implicit time step of size 6 seconds, with minimum increment size of $10^{-9}$ seconds, and initial increment of size $10^{-3}$ seconds. The mid-line ($y=0$) running across the length of the workpiece is constrained to not move in the $y$ direction by using rollers. This ensures that the normal (respectively, shear) stress components are symmetric (respectively, anti-symmetric) with respect to $y=0$. 

Readers can access the input file of this Abaqus simulation here: \\ {\color{blue}
\href{https://tinyurl.com/wefcwps}{https://tinyurl.com/wefcwps}}.
\subsection{Fitting results}
\begin{figure}[h!]
	\centering
	\includegraphics[scale=0.6]{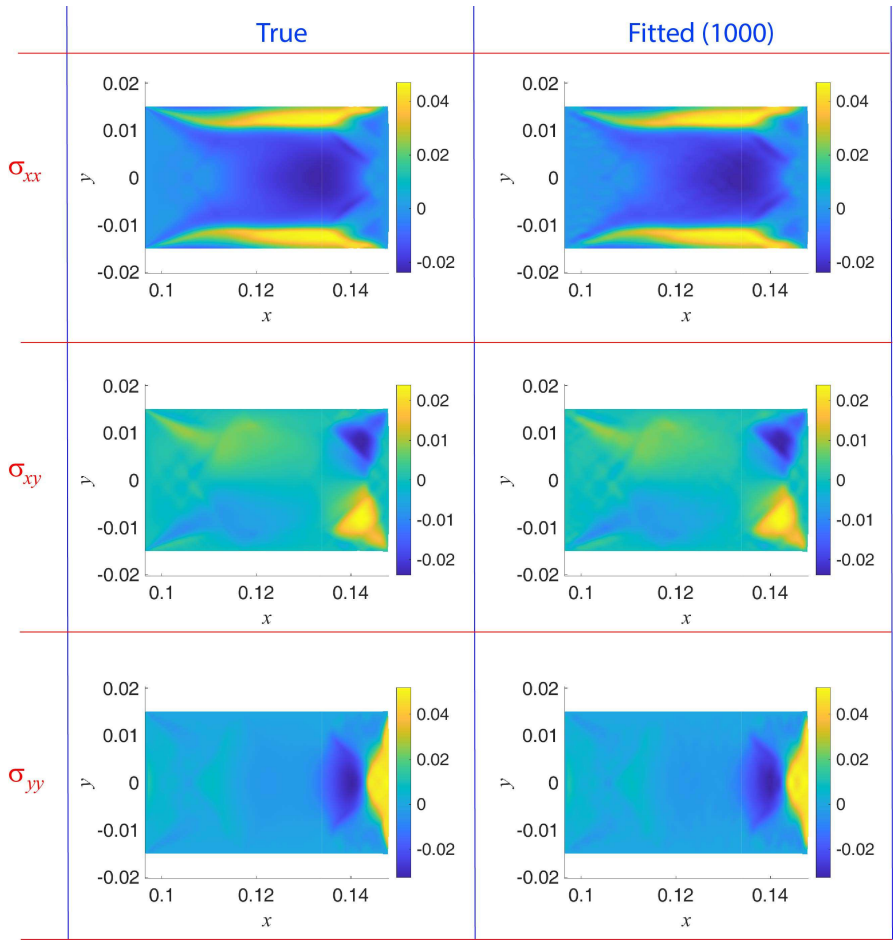}
	\caption{True and fitted (using 1000 eigenfunctions) stress components for metal forming example (in GPa).}
	\label{plots}
\end{figure}
We show the stress components obtained from the Abaqus simulation in the left column of figure \ref{plots}. This stress field is fitted using the first 1000 eigenfunctions computed over the same (deformed) mesh as obtained from the simulation, using the procedure described in appendix D. The fitted components are shown in the right column of figure \ref{plots}. We observe that the fit is good. We also plot the fitted components using 10, 60 and 102 eigenfunctions respectively in figure \ref{moreplots} to indicate how the fits get progressively better with incorporation of more eigenfunctions. Next, we plot the squared relative error measure $E_N$, described in section 7, versus $N$ in figure \ref{error}. Convergence is like $N^{-1}$ for large $N$, with $E_{318}<0.01$. Finally, we plot the time required for computation of the first 1000 eigenfunctions for different refinements of the mesh used for the rolling simulation in figure \ref{comptimeroll}. The coarser meshes used for this plot were obtained from different simulations done to study mesh convergence: the stresses from those simulations are not reported here.
\begin{figure}[t!]
	\includegraphics[scale=0.80]{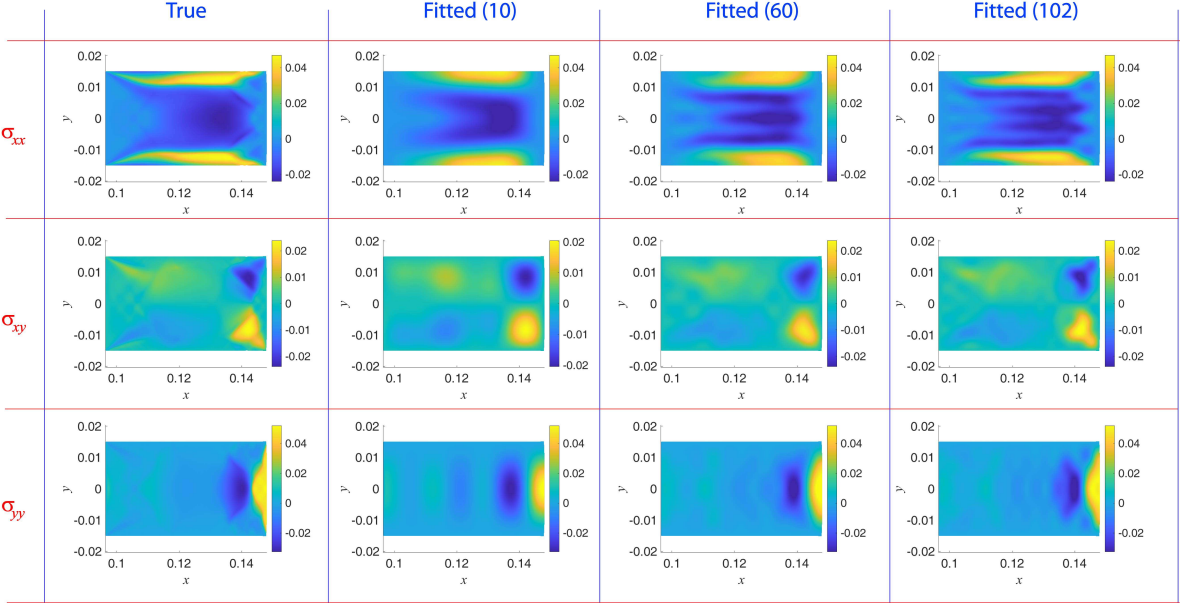}
	\caption{True and fitted (using 10, 60 and 102 eigenfunctions, respectively) stress components for metal forming example (in GPa).}
	\label{moreplots}
\end{figure}
\begin{figure}[t!]
	\centering
	\includegraphics[scale=0.85]{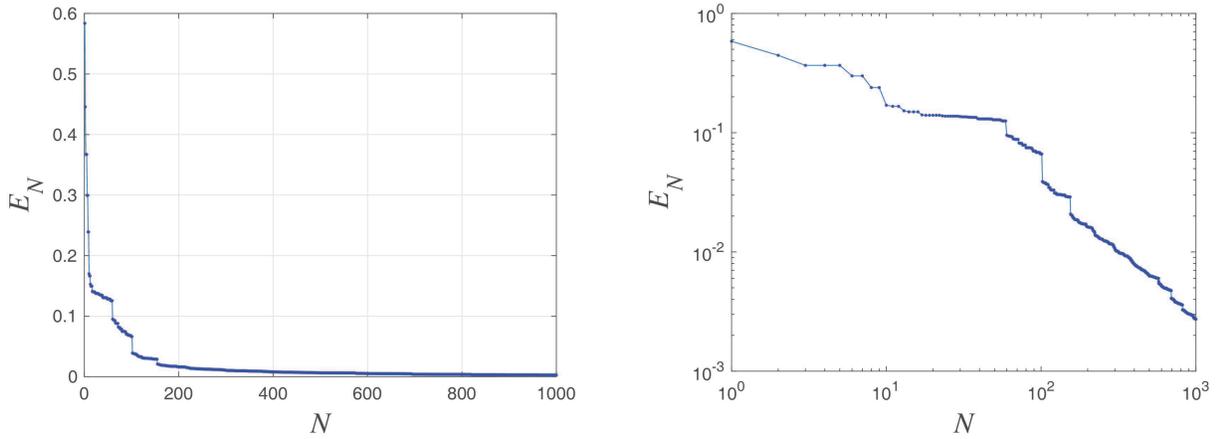}
	\caption{$E_N$ versus $N$, metal forming example. Left: linear scale; right: log-log scale.}
	\label{error}
\end{figure}
\begin{figure}[t!]
	\centering
	\includegraphics[scale=0.6]{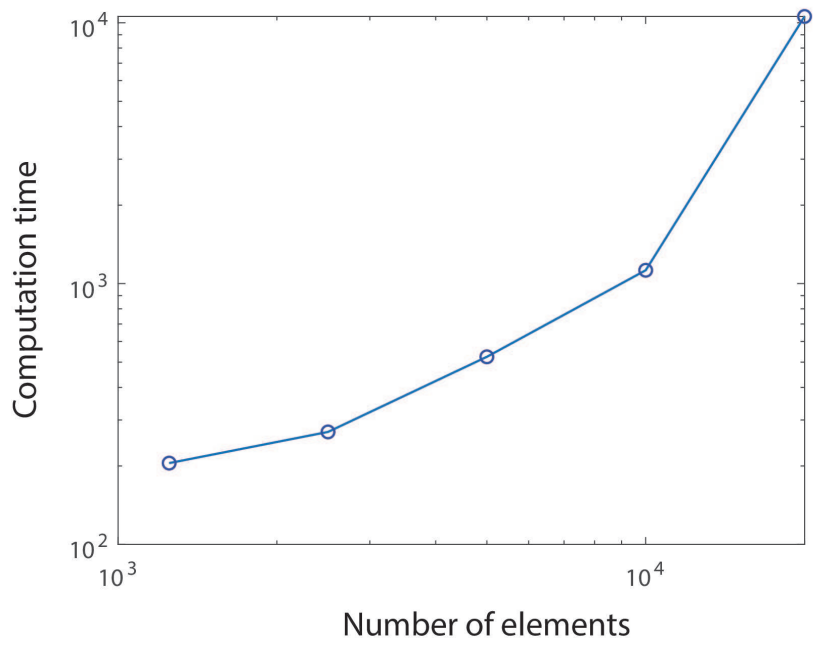}
	\caption{Time for computation of first one thousand eigenfunctions for different refinements of mesh used in metal forming simulation (log-log scale).}
	\label{comptimeroll}
\end{figure}
\end{subappendices}

\clearpage

\section{Stress fields used in section 7}  \label{expressions}
\underline{Example 1}
$$ \sigma_{rr}=-\frac{0.067}{r^2}+\frac{1.6}{r}-12.833+40 r-41.667 r^2, $$
$$\sigma_{r\theta}= -\frac{0.022}{r^2} +5.5 -40 r + 75  r^2 , $$
$$\sigma_{\theta\theta}= 3.667-40 r+100r^2.$$

\underline{Example 2}
$$\sigma_{rr}=\frac{- 0.321}{r} -\frac{- 4 \,{r}^{3}+8.563 \times 10^{-4} \,\sin \left(  200\,r \right) + 0.411\,\ln \left(  200\,r \right) r }{r^2}$$
$$-{\frac {9.408 \times 10^{-3}+ 7.611 \times 10^{-2}\,r\cos \left(  200\,r \right) }{{r}^{2}}}, $$
$$\sigma_{r\theta}= {\frac {{r}^{3}- 2.854\times 10^{-4} \,\sin \left( 200\,r \right) + 5.708 \times 10^{-2} \, r\cos \left( 200\,r \right) - 3.853 \times 10^{-2}\,r+ 7.840 \times 10^{-4}}{{r}^{2}}}, $$
$$\sigma_{\theta\theta}=- 3.805\,\sin \left( 200\,r \right) -\frac{1.284 \times 10^{-2}}{r} +r.$$

\underline{Example 3}
$$\sigma_{rr}(r)=-\frac{p_c}{\frac{r_c^2}{r_i^2}-1}\left(\frac{r_c^2}{r_i^2}-\frac{r_c^2}{r^2}\right) \hspace{3mm} \text{for} \hspace{3mm} r_i\leq r \leq r_c,$$
$$\sigma_{rr}(r)=-\frac{p_c}{\frac{r_o^2}{r_c^2}-1}\left(\frac{r_o^2}{r^2}-1\right) \hspace{3mm} \text{for} \hspace{3mm} r_c\leq r \leq r_o,$$  
$$\sigma_{\theta\theta}(r)=-\frac{p_c}{\frac{r_c^2}{r_i^2}-1}\left(\frac{r_c^2}{r_i^2}+\frac{r_c^2}{r^2}\right) \hspace{3mm} \text{for} \hspace{3mm} r_i\leq r \leq r_c,$$
$$\sigma_{\theta\theta}(r)=\frac{p_c}{\frac{r_o^2}{r_c^2}+1}\left(\frac{r_o^2}{r^2}+1\right) \hspace{3mm} \text{for} \hspace{3mm} r_c\leq r \leq r_o,$$ 
$$\sigma_{r\theta}(r)=0 \hspace{3mm} \text{for} \hspace{3mm} r_i\leq r \leq r_o,$$
where 
$$ p_c=\frac{E \delta}{r_c}\left\{\frac{1}{\frac{r_c^2+r_i^2}{\left(r_c^2-r_i^2\right)}-\nu} + \frac{1}{\frac{r_o^2+r_c^2}{ \left(r_o^2-r_c^2\right)}+\nu} \right\}.$$
We have used $r_i=0.1$, $r_c=0.2$, $r_o=0.3$, $\nu=0.3$, $E \delta=10^6$ in any consistent units.
\end{appendices}


\begin{thebibliography}{9}
	
	\bibitem{rayleigh} {Lord Rayleigh. 1877 {\em The theory of sound.} London, UK: Macmillan.}
	
	
	\bibitem{knops} {Knops RJ, Payne LE. 1971 {\em Uniqueness theorems in linear elasticity.} Berlin: Springer-Verlag.}
	
	\bibitem{schajer2013practical}  {Schajer GS. 2013 {\em Practical residual stress measurement methods.} Chichester, UK: John Wiley \& Sons.} 
	
	\bibitem{withers2001residual}  {Withers PJ, Bhadeshia HKDH. 2001  Residual stress. Part 1 - Measurement techniques. {\em Materials Science and Technology} {\bf 17} 355-365. (doi:{\color{blue}\href{https://doi.org/10.1179/026708301101509980}{10.1179/026708301101509980}})}
	
	\bibitem{withers2001residual2}  {Withers PJ, Bhadeshia HKDH. 2001  Residual stress. Part 2 - Nature and origins. {\em Materials Science and Technology} {\bf 17} 366-375. (doi:{\color{blue}\href{https://doi.org/10.1179/026708301101510087}{10.1179/026708301101510087}})}
	
	\bibitem{boley} {Boley BA, Weiner JH. 1960 {\em Theory of thermal stresses.} New York, NY: John Wiley and Sons.}
	
	\bibitem{eslami} {Eslami MR, Hetnarski RB, Ignaczak J, Noda N, Sumi N, Tanigawa Y. 2013 {\em Theory of elasticity and thermal stresses.} Dordrecht, The Netherlands: Springer.}
	
	\bibitem{eshelby1957determination} {Eshelby JD. 1957  The determination of the elastic field of an ellipsoidal inclusion, and related problems. {\em Proceedings of the Royal Society of London A} {\bf 241} 376-396. (doi:{\color{blue}\href{https://doi.org/10.1098/rspa.1957.0133}{10.1098/rspa.1957.0133}})}  
	
	\bibitem{eshelby1959elastic} {Eshelby JD. 1959  The elastic field outside an ellipsoidal inclusion. {\em Proceedings of the Royal Society of London A} {\bf 252} 561-569. (doi:{\color{blue}\href{https://doi.org/10.1098/rspa.1959.0173}{10.1098/rspa.1959.0173}})} 
	
	\bibitem{eshelby1957elastic} {Eshelby JD. 1958  The elastic model of lattice defects. {\em Annalen der Physik} {\bf 1} 116-121. (doi:{\color{blue}\href{https://doi.org/10.1002/andp.19574560113}{10.1002/andp.19574560113}})} 
	
	\bibitem{eshelby1966simple} {Eshelby JD. 1966  A simple derivation of the elastic field of an edge dislocation. {\em British Journal of Applied Physics} {\bf 17} 1131-1135. (doi:{\color{blue}\href{https://doi.org/10.1088/0508-3443/17/9/303}{10.1088/0508-3443/17/9/303}})} 
	
	\bibitem{kroener} {Kr\"{o}ner E. 1981  Continuum theory of defects. In {\em Les Houches, Session 35, 1980 - Physiques des Defaults \em (eds R. Balian, M Kl\'eman \& J. P. Poirer), pp. 215-315. New York, NY: North-Holland.}}
	
	\bibitem{mura} {Mura T. 1987 {\em Micromechanics of defects in solids.} Dordrecht, The Netherlands: Martinus Nijhoff Publishers.}
	
	\bibitem{goriely} {Goriely A. 2017 {\em The mathematics and mechanics of biological growth.} New York, NY: Springer.}
	
	\bibitem{zurlo} {Zurlo G, Truskinovsky L. 2017  Printing non-euclidean solids. {\em Physical Review Letters} {\bf 119} 048001. (doi:{\color{blue}\href{https://doi.org/10.1103/PhysRevLett.119.048001}{10.1103/PhysRevLett.119.048001}})} 
	
	\bibitem{swain} {Swain D, Gupta A. 2018  Biological growth in bodies with incoherent interfaces. {\em Proceedings of the Royal Society of London A} {\bf 474} 20170716. (doi:{\color{blue}\href{https://doi.org/10.1098/rspa.2017.0716}{10.1098/rspa.2017.0716}})}.
	
	\bibitem{epstein} {Epstein M. 2012  {\em The elements of continuum biomechanics.} Chichester, UK: John Wiley \& Sons.}
	
	\bibitem{hoger1986} {Hoger A. 1986  On the determination of residual stress in an elastic body. {\em Journal of Elasticity} {\bf 16} 303-324. (doi:{\color{blue}\href{https://doi.org/10.1007/BF00040818}{10.1007/BF00040818}})}
	
	\bibitem{hoger1985} {Hoger A. 1985  On the residual stress possible in an elastic body with material symmetry. {\em Archive for Rational Mechanics and Analysis} {\bf 88} 271-290. (doi:{\color{blue}\href{https://doi.org/10.1007/BF00752113}{10.1007/BF00752113}})} 
	
	\bibitem{shams} {Shams M, Destrade M, Ogden RW. 2011  Initial stresses in elastic solids: constitutive laws and acoustoelasticity. {\em Wave Motion} {\bf 48} 552-567. (doi:{\color{blue}\href{https://doi.org/10.1016/j.wavemoti.2011.04.004}{10.1016/j.wavemoti.2011.04.004}})} 
	
	\bibitem{gower} {Gower AL, Shearer T, Ciarletta P. 2017  A new restriction for initial stressed elastic solids. {\em The Quarterly Journal of Mechanics and Applied Mathematics} {\bf 70.4} 455-478. (doi:{\color{blue}\href{https://doi.org/10.1093/qjmam/hbx020}{10.1093/qjmam/hbx020}})}
	
	\bibitem{ISS} {Gower AL, Ciarletta P, Destrade M. 2015  Initial stress symmetry and its applications in elasticity. {\em Proceedings of the Royal Society A} {\bf 471} 20150448. (doi:{\color{blue}\href{https://doi.org/10.1098/rspa.2015.0448}{10.1098/rspa.2015.0448}})}
	
	\bibitem{shariff} {Shariff MHBM, Bustamante R, Merodio J. 2017  On the spectral analysis of residual stress in finite elasticity. {\em IMA Journal of Applied Mathematics} {\bf 82} 656-680. (doi:{\color{blue}\href{https://doi.org/10.1093/imamat/hxx007}{10.1093/imamat/hxx007}})}
	
	\bibitem{hartig} {Man CS. 1998  Hartig's law and linear elasticity with initial stress. {\em Inverse Problems} {\bf 14} 313-319. (doi:{\color{blue}\href{https://doi.org/10.1088/0266-5611/14/2/007}{10.1088/0266-5611/14/2/007}})}
	
	\bibitem{destrade} {Destrade M, Ogden RW. 2013  On stress dependent elastic moduli and wave speeds. {\em IMA Journal of Applied Mathematics} {\bf 78.5} 965-997. (doi:{\color{blue}\href{https://doi.org/10.1093/imamat/hxs003}{10.1093/imamat/hxs003}})}
	
	\bibitem{ciarletta} {Ciarletta P, Destrade M, Gower AL, Taffetani M. 2016  Morphology of residually stresses tubular tissues: beyond the elastic multiplicative decomposition. {\em Journal of the Mechanics and Physics of Solids} {\bf 90} 242-253. (doi:{\color{blue}\href{https://doi.org/10.1016/j.jmps.2016.02.020}{10.1016/j.jmps.2016.02.020}})} 
	
	\bibitem{prime1999residual} {Prime MB. 1999 Residual stress measurement by successive extension of a slot: the crack compliance method. {\em Applied Mechanics Reviews} {\bf 52} 75-96. (doi:{\color{blue}\href{https://doi.org/10.2172/481857}{10.2172/481857}})} 
	
	\bibitem{schajer2007residual} {Schajer GS, Prime MB. 2007 Residual stress solution extrapolation for the slitting method using equilibrium constraints. {\em Journal of Engineering Materials and Technology} {\bf 129} 226-232. (doi:{\color{blue}\href{https://doi.org/10.1115/1.2400281}{10.1115/1.2400281}})}
	
	\bibitem{akbari} {Akbari S, Taheri-Behrooz F, Shokrieh MM. 2013 Slitting measurement of residual hoop stresses through the wall-thickness of a filament wound composite ring. {\em Experimental Mechanics} {\bf 53} 1509-1518. (doi:{\color{blue}\href{https://doi.org/10.1007/s11340-013-9768-8}{10.1007/s11340-013-9768-8}})} 
	
	\bibitem{beghini} {Beghini M, Bertini L, Mori LF, Rosellini W. 2009 Genetic algorithm optimization of the hole-drilling method for non-uniform residual stress fields. {\em The Journal of Strain Analysis for Engineering Design} {\bf 44} 105-115. (doi:{\color{blue}\href{https://doi.org/10.1243/03093247JSA457}{10.1243/03093247JSA457}})} 
	
	\bibitem{ballard} {Ballard P, Constantinescu A. 1994  On the inversion of subsurface residual stresses from surface stress measurements. {\em Journal of the Mechanics and Physics of Solids} {\bf 42} 1767-1787. (doi:{\color{blue}\href{https://doi.org/10.1016/0022-5096(94)90071-X}{10.1016/0022-5096(94)90071-X}})} 
	
	\bibitem{robertson} {Robertson R. 1998  Determining residual stress from boundary measurements: a linearized approach. {\em Journal of Elasticity} {\bf 52} 63-73. (doi:{\color{blue}\href{https://doi.org/10.1023/A:1007551818084}{10.1023/A:1007551818084}})}
	
	\bibitem{acoustic} {Nedin R, Vatulyan A. 2013  Inverse problem of non-homogeneous residual stress identification in thin plates. {\em International Journal of Solids and Structures.} {\bf 50} 2107-2114. (doi:{\color{blue}\href{https://doi.org/10.1016/j.ijsolstr.2013.03.008}{10.1016/j.ijsolstr.2013.03.008}})} 
	
	\bibitem{gao} {Gao Z, Mura T. 1989  On the inversion of residual stresses from surface displacements. {\em Journal of Applied Mechanics.} {\bf 56(3)} 508-513. (doi:{\color{blue}\href{https://doi.org/10.1115/1.3176119}{10.1115/1.3176119}})} 
	
	\bibitem{schajerinv} {Schajer G, Prime MB. 2006 Use of inverse solutions for residual stress measurements. {\em Journal of Engineering Materials and Technology.} {\bf 128(3)} 375-382. (doi:{\color{blue}\href{https://doi.org/10.1115/1.2204952}{10.1115/1.2204952}})}
	
	\bibitem{faghidian} {Faghidian SA. 2014  A smoothed inverse eigenstrain method for reconstruction of the regularized residual fields. {\em International Journal of Solids and Structures.} {\bf 51} 4427-4434. (doi:{\color{blue}\href{https://doi.org/10.1016/j.ijsolstr.2014.09.012}{10.1016/j.ijsolstr.2014.09.012}})}
	
	\bibitem{pobedrja1978} {Pobedrja BE. 1978  Problems in terms of a stress tensor. {\em Doklady Akademii Nauk SSSR} {\bf 240} 564-567.} See {\color{blue}\href{http://mi.mathnet.ru/eng/dan41744}{http://mi.mathnet.ru/eng/dan41744}}.
	
	\bibitem{pobedrja1980} {Pobedrja BE. 1980  A new formulation of the problem in mechanics of a deformable solid body under stress. {\em Soviet Mathematics - Doklady} {\bf 22} 88-91.}
	
	\bibitem{li} {Li S, Gupta A, Markenscoff X. 2005  Conservation laws of linear elasticity in stress formulations. {\em Proceedings of the Royal Society A} {\bf 461} 99-116. (doi:{\color{blue}\href{https://doi.org/10.1098/rspa.2004.1347}{10.1098/rspa.2004.1347}})}
	
	\bibitem{markenscoff} {Markenscoff X, Gupta A. 2007  Configurational balance laws for incompatibility in stress space. {\em Proceedings of the Royal Society A} {\bf 463} 1379-1392. (doi:{\color{blue}\href{https://doi.org/10.1098/rspa.2007.1828} {10.1098/rspa.2007.1828}})} 
	
	\bibitem{gurtin} {Gurtin ME. 1972 The linear theory of elasticity. In {\em Handbuch der Physik} vol. VIa 2 (eds. S. Flugge \& C. Truesdell), pp. 1-295. Berlin, Germany: Springer.}
	
	\bibitem{temam} {Temam R. 1977 {\em Navier-Stokes equations - theory and numerical analysis.} New York, NY: North-Holland Publishing Company.}
	
	\bibitem{halmos} {Halmos PR. 1963  What does the spectral theorem say? {\em The American Mathematical Monthly} {\bf 70} 241-247.} See {\color{blue}\href{https://www.jstor.org/stable/2313117}{https://www.jstor.org/stable/2313117}}.
	
	\bibitem{Courant} {Courant R, Hilbert D. 1966 {\em Methods of mathematical physics, volume 1}. New York, NY: Interscience Publishers.}
	
	\bibitem{zienkiewicz} {Zienkiewicz OC. 1972 {\em Introductory lectures on the finite element method.} New York, NY: Springer.}
	
	\bibitem{gibbs1899fourier} {Hewitt E, Hewitt R E. 1979  The Gibbs-Wilbraham phenomenon: an episode in Fourier analysis. {\em Archive for History of Exact Sciences} {\bf 21} 129-160.} See {\color{blue}\href{https://www.jstor.org/stable/41133555}{https://www.jstor.org/stable/41133555}}.
	
	\bibitem{barber} {Barber {JR.} 1992 {{\em Elasticity.}} {Dordrecht, The Netherlands: Springer.}}
	
       \bibitem{Giovanni} {Giovanni L. 2009 {\em A first course in Sobolev spaces.} Providence, Rhode Island: American Mathematical Society.}

       \bibitem{brezis} {Brezis H. 2011 {\em Functional analysis, Sobolev spaces and partial differential equations.} New York, NY: Springer.}

       \bibitem{Bossavit} {Bossavit A.} 1998 {{\em Computational electromagnetism.} {Academic Press.}}

       \bibitem{ekland} {Ekeland I, T\'emam R. 1999 {\em Convex analysis and variational problems.} Philadelphia: Society for Industrial and Applied Mathematics.}	

       \bibitem{Bathe} {Bathe KJ.} 1996 {{\em Finite element procedures.} {New Jersey: Prentice-Hall.}
	}
	
	\bibitem{Bathe2} {Bathe KJ. 2001 {The inf-sup condition and its evaluation for mixed finite element methods.} {{\em Computers and Structures}} {\bf 79} 243-252. (doi:{\color{blue}\href{https://doi.org/10.1016/S0045-7949(00)00123-1} {10.1016/S0045-7949(00)00123-1}})} 
	
	\bibitem{Bathe3} {Bao W, Wang X, Bathe KJ. 2001 {On the inf-sup condition of mixed finite-element formulations for acoustic fluids.} {{\em Mathematical Models and Methods in Applied Sciences.}} {\bf 11(5)} 883-901. (doi:{\color{blue}\href{https://doi.org/10.1142/S0218202501001161} {10.1142/S0218202501001161}})} 	

     \bibitem{Falk} {Falk RS. 2008 Finite element methods for linear elasticity. In {\em Mixed finite elements, compatibility conditions, and applications}, pp. 159-194. New York, NY: Springer.}

\end{thebibliography}
\end{document}